%
%
%

\documentclass[11pt,a4paper]{amsart}
\usepackage{amssymb}
\usepackage[arrow,matrix,curve]{xy}

\title[Categorification of a linear algebra identity]
{Categorification of a linear algebra identity and factorization of
Serre functors}
\author{Sefi Ladkani}

\DeclareMathOperator{\Hom}{Hom}
\DeclareMathOperator{\modf}{mod}
\DeclareMathOperator{\per}{per}
\DeclareMathOperator{\rank}{rank}

\newcommand{\ten}{\otimes}
\newcommand{\tenl}{\stackrel{\mathbf{L}}{\otimes}}

\newcommand{\bC}{\mathbb{C}}
\newcommand{\bZ}{\mathbb{Z}}
\newcommand{\cB}{\mathcal{B}}
\newcommand{\cC}{\mathcal{C}}
\newcommand{\cD}{\mathcal{D}}
\newcommand{\cK}{\mathcal{K}}
\newcommand{\cO}{\mathcal{O}}

\newcommand{\gL}{\Lambda}
\newcommand{\eps}{\varepsilon}
\newcommand{\vphi}{\varphi}

\newcommand{\dA}{\cD^b(A)}
\newcommand{\dgL}{\cD^b(\gL)}
\newcommand{\dQ}{\cD^b(Q)}

\newcommand{\bform}[2]{\left\langle {#1}, {#2} \right\rangle}
\newcommand{\Bform}{\bform{\cdot}{\cdot}}

\theoremstyle{plain}
\newtheorem{theorem}{Theorem}[section]
\newtheorem{lemma}[theorem]{Lemma}
\newtheorem{prop}[theorem]{Proposition}
\newtheorem{cor}[theorem]{Corollary}

\theoremstyle{definition}
\newtheorem{defn}[theorem]{Definition}
\newtheorem{rem}[theorem]{Remark}
\newtheorem{example}[theorem]{Example}

\numberwithin{equation}{section}

\begin{document}

\begin{abstract}
We provide a categorical interpretation of a well-known identity from
linear algebra as an isomorphism of certain functors between
triangulated categories arising from finite dimensional algebras.

As a consequence, we deduce that the Serre functor of a finite
dimensional triangular algebra $A$ has always a lift, up to shift, to a
product of suitably defined reflection functors in the category of
perfect complexes over the trivial extension algebra of $A$.
\end{abstract}

\maketitle

\section{Introduction}

The general philosophy behind categorification, as explained for
example in~\cite{BaezDolan98}, is that numbers should be interpreted as
sets, sets as categories, equalities as isomorphisms and so on. When
one considers linear operators, the following suggested interpretation
makes sense, see also~\cite{KMS08} for a similar definition.

Given the data of a free $\bZ$-module $V$ of finite rank and linear
maps $f_1, f_2, \dots, f_n, g : V \to V$ satisfying $g = f_1 \cdot f_2
\cdot \ldots \cdot f_n$, a (weak) \emph{categorification} of this data
consists of an abelian or triangulated category $\cB$ whose
Grothendieck group $K_0(\cB)$ is isomorphic to $V$, together with exact
functors $F_i : \cB \to \cB$ and $G : \cB \to \cB$, such that:
\begin{itemize}
\item
$F_1, F_2, \dots, F_n, G$ induce linear maps on $K_0(\cB)$ which, under
the isomorphism with $V$, coincide with $f_1,f_2,\dots,f_n,g$;
\item
There is an isomorphism of functors between $G$ and the composition
$F_1 \cdot F_2 \cdot \ldots \cdot F_n$.
\end{itemize}
When $V$ carries additional structure, such as a bilinear form, it is
preferable that this structure lifts to $\cB$ as well.

\subsection{A linear algebra identity}
The following well-known statement concerns products of reflection-like
matrices defined by a square matrix.

\begin{prop} \label{p:linalg}
Let $B$ be any square $n \times n$ matrix over a commutative ring. Then
\begin{equation} \label{e:prod}
-B_{+}^{-1} B_{-}^T = r^B_1 \cdot r^B_2 \cdot \ldots \cdot r^B_n ,
\end{equation}
where the matrices $B_{+}$ and $B_{-}$ are the upper and lower triangular parts
of $B$, defined by
\begin{align*}
(B_{+})_{ij} =
\begin{cases}
B_{ij} & \text{if $i < j$,} \\
1      & \text{if $i = j$,} \\
0      & \text{otherwise,} \\
\end{cases}
& &
(B_{-})_{ij} =
\begin{cases}
B_{ji}     & \text{if $i < j$,} \\
B_{ii} - 1 & \text{if $i = j$,} \\
0          & \text{otherwise,}
\end{cases}
\end{align*}
(so that $B = B_{+} + B_{-}^T$), and for each $1 \leq i \leq n$, the
square matrix $r^B_i$ is obtained from the identity matrix by
subtracting the $i$-th row of $B$, that is,
\begin{align}
\label{e:rBi}
\left(r^B_i \right)_{st} = \delta_{st} - \delta_{si} B_{it},
&& 1 \leq s,t \leq n .
\end{align}
\end{prop}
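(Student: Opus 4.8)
The plan is to reduce \eqref{e:prod} to a form that avoids inverting $B_{+}$ and then to expand the product of reflections combinatorially. First I would observe that each reflection is a rank-one perturbation of the identity: writing $e_i$ for the $i$-th standard column vector and $b_i^{T} = (B_{i1}, \dots, B_{in})$ for the $i$-th row of $B$, formula~\eqref{e:rBi} says precisely that $r^B_i = I - e_i b_i^{T}$. Since $B_{+}$ is upper unitriangular it is invertible over any commutative ring, so \eqref{e:prod} is equivalent to
\begin{equation*}
B_{+} \cdot r^B_1 r^B_2 \cdots r^B_n = -B_{-}^{T}.
\end{equation*}
Setting $R = r^B_1 \cdots r^B_n$ and $N = B_{+} - I$ (the strictly upper triangular part, with $N_{sr} = B_{sr}$ for $s < r$ and $0$ otherwise), and noting that $-B_{-}^{T}$ has $(s,t)$-entry $\delta_{st} - B_{st}[s \geq t]$ (where $[P]$ is $1$ if $P$ holds and $0$ otherwise), the identity reduces to the scalar statement
\begin{equation*}
R_{st} + \sum_{r > s} B_{sr}\, R_{rt} = \delta_{st} - B_{st}[s \geq t].
\end{equation*}

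Next I would produce a closed form for the entries of $R$. Expanding $\prod_{i=1}^n (I - e_i b_i^{T})$ over subsets and repeatedly using $b_i^{T} e_j = B_{ij}$, any product taken in increasing order over an index set $i_1 < \dots < i_k$ collapses, since $e_{i_1} b_{i_1}^{T} e_{i_2} b_{i_2}^{T} \cdots e_{i_k} b_{i_k}^{T} = B_{i_1 i_2} \cdots B_{i_{k-1} i_k}\, e_{i_1} b_{i_k}^{T}$. Reading off the $(s,t)$-entry, this gives
\begin{equation*}
R_{st} = \delta_{st} + \sum_{k \geq 1} (-1)^k \sum_{s = i_1 < i_2 < \cdots < i_k} B_{i_1 i_2} B_{i_2 i_3} \cdots B_{i_{k-1} i_k}\, B_{i_k t},
\end{equation*}
so $R_{st}$ is a signed sum over strictly increasing chains that start at $s$, each weighted by the product of consecutive entries of $B$ along the chain together with a final factor $B_{i_k t}$ (with $t$ unconstrained relative to $i_k$).

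The heart of the matter, and the step I expect to be the main obstacle, is a telescoping cancellation in the scalar identity. I would split the chain sum for $R_{st}$ into its length-one part ($k = 1$, contributing $-B_{st}$) and its longer part ($k \geq 2$). In $\sum_{r > s} B_{sr} R_{rt}$, the diagonal term $\delta_{rt}$ of $R_{rt}$ contributes $B_{st}[t > s]$, while its chain part prepends the vertex $s$ to every chain starting at some $r > s$, producing a sign-reversed copy of the $k \geq 2$ part of $R_{st}$. Thus the $k \geq 2$ contributions cancel, and what remains is $\delta_{st} + (-B_{st}) + B_{st}[t > s] = \delta_{st} - B_{st}[s \geq t]$, exactly as required. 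The delicate points are the bookkeeping of signs and the careful treatment of the boundary terms (the diagonal $\delta_{st}$, the length-one chains, and the diagonal of $R_{rt}$); once these are in place the cancellation is forced. As a fallback I would try induction on $n$ via the block form $r^B_i = \bigl(\begin{smallmatrix} 1 & 0 \\ \ast & r^{B'}_{i-1}\end{smallmatrix}\bigr)$ for $i \geq 2$, where $B'$ is the lower-right $(n-1) \times (n-1)$ block of $B$; but computing the resulting off-diagonal block and matching it to $-B_{+}^{-1} B_{-}^{T}$ appears to require the same combinatorics, so I would favour the direct computation above.
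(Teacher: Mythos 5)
Your argument is correct and complete: the reduction to $B_{+}R = -B_{-}^{T}$ (using that $B_{+}$ is unitriangular, hence invertible over any commutative ring), the rank-one expansion $R_{st} = \delta_{st} + \sum_{k\geq 1}(-1)^{k}\sum_{s=i_1<\cdots<i_k} B_{i_1i_2}\cdots B_{i_{k-1}i_k}B_{i_kt}$, and the cancellation obtained by prepending $s$ to chains starting at $r>s$ all check out, including the signs and the three boundary terms you single out ($\delta_{st}$, the $k=1$ chains, and the diagonal of $R_{rt}$). It is, however, a genuinely different route from the paper's, since the paper contains no direct proof of Proposition~\ref{p:linalg} at all: the statement is attributed to Bourbaki, Coxeter and Coleman (with an alternative proof in~\cite{Ladkani08}), and the paper's own derivation is categorical --- Theorem~\ref{t:AM} decategorifies to the identity (Corollary~\ref{c:decat}), and the realization results of Section~\ref{ssec:realize} show this covers every integral $B$ with non-negative entries and positive diagonal. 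Your computation buys two things over that route: it is elementary and self-contained, and it yields the identity over an arbitrary commutative ring in one stroke, whereas the categorical derivation directly gives only the non-negative integral case and would need the supplementary remark that both sides of~\eqref{e:prod} are polynomial in the entries of $B$ (legitimate, as $B_{+}^{-1}$ has polynomial entries) to conclude the general statement. What it does not buy is the paper's actual point, namely the functor-level isomorphism of which~\eqref{e:prod} is merely the $K_0$-shadow. It is worth observing that your proof is in effect the precise decategorification of the paper's machinery: your chain sum for $R_{st}$ matches, term by term, the bimodule complex $T^{\Lambda}_{\varphi}$ of Lemma~\ref{l:T} (the summand $\Lambda e_{i_1}\otimes e_{i_1}\Lambda e_{i_2}\otimes\cdots\otimes e_{i_k}\Lambda$ in~\eqref{e:Tr} contributes exactly a product of Cartan-matrix entries along an increasing chain), and your telescoping cancellation is the Euler-characteristic shadow of the contractibility and quasi-isomorphism statements in Proposition~\ref{p:TA} and Corollary~\ref{c:QIS}.
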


This statement originally appeared as an exercise in the book of
Bourbaki~\cite[Ch.~5, \S6, no.~3]{Bourbaki02}, following an argument
presented in Coxeter's paper~\cite{Coxeter51}. Various specific cases
have since then appeared in the literature, including
A'Campo~\cite{ACampo76} in the bipartite case and
Howlett~\cite{Howlett82} in the symmetric case. The general form is
stated and proved in an article by Coleman~\cite{Coleman89}, and an
alternative proof can be found in~\cite{Ladkani08}.

As important special case is when $B = C + C^T$ is the symmetrization
of an upper triangular square matrix $C$ with ones on its main
diagonal. In this case the matrices $r^B_i$ are reflections, and the
proposition implies that
\begin{equation} \label{e:coxprod}
-C^{-1} C^T = r^B_1 \cdot r^B_2 \cdot \ldots \cdot r^B_n .
\end{equation}

This equality provides us with two points of view on the so-called
\emph{Coxeter transformation}. First, as known in Lie theory, it is the
product of the simple reflections, as given by the right hand side
of~\eqref{e:coxprod}. Second, as follows from the left hand side, it
can also be described as the automorphism $\Phi$ satisfying
\[
\bform{x}{y}_C = - \bform{y}{\Phi x}_C
\]
where $\Bform_C$ is the bilinear form defined by the matrix $C$ and $x,
y$ are any two vectors, as known in the representation theory of
algebras, see~\cite{Lenzing99}.

\subsection{Categorical interpretation}

Our categorical interpretation of equations~\eqref{e:prod}
and~\eqref{e:coxprod} is achieved by using functors on triangulated
categories arising from finite dimensional algebras. In order to state
our result in precise terms, we need to recall a few notions from the
representation theory of finite dimensional algebras.

For a finite dimensional algebra $A$ over a field $k$, denote by $\dA$
the bounded derived category of finite dimensional right $A$-modules,
and by $\per A$ its full triangulated subcategory consisting of all
complexes quasi-isomorphic to perfect complexes, that is, bounded
complexes whose terms are finitely generated projective $A$-modules.

The Grothendieck group $K_0(\per A)$ is free abelian of finite rank,
with a basis consisting of the classes of the indecomposable projective
$A$-modules. It is equipped with a bilinear form induced by the Euler
form
\begin{align*}
\bform{X}{Y}_{A} = \sum_{r \in \bZ} (-1)^r \dim_k \Hom_{\dA}(X, Y[r]) &
& X, Y \in \per A.
\end{align*}

The algebra $A$ is called \emph{triangular} if there exist primitive
orthogonal idempotents $e_1, \dots, e_n$ of $A$ such that $e_i A e_j =
0$ for any $j < i$ and $e_i A e_i \simeq k$ for $1 \leq i \leq n$. The
modules $P_i = e_i A$ then form a complete collection of indecomposable
projectives. Taking their classes as a basis for $K_0(\per A)$, it will be
convenient for us to order them $[P_n], \dots, [P_1]$ and to define the
\emph{Cartan matrix} $C_A$ as the matrix of $\Bform_A$ with respect to
that basis, namely
\begin{align*}
(C_A)_{ij} &= \bform{P_{n+1-i}}{P_{n+1-j}}_A = \dim_k \Hom_A
(P_{n+1-i}, P_{n+1-j}) \\
&= \dim_k e_{n+1-j}Ae_{n+1-i} ,
\end{align*}
so that $C_A$ is upper triangular with ones on its main diagonal.

Similarly, for a (finite dimensional) $A$-$A$-bimodule $M$ we can define
a matrix $C_M$ by
\[
(C_M)_{ij} = \dim_k e_{n+1-j} M e_{n+1-i} ,
\]
and call $M$ \emph{triangular} if $C_M$ is upper triangular, or
equivalently, $e_i M e_j = 0$ for any $j < i$. We have $C_M^T =
C_{DM}$, where $DM$ is the \emph{dual} of $M$, defined as $DM =
\Hom_k(M,k)$.

The \emph{trivial extension} $\gL = A \ltimes DM$ is the $k$-algebra
which has $A \oplus DM$ as its underlying vector space, with the
multiplication defined by $(a,\mu) (a',\mu') = (aa', a \mu' + \mu a')$.
Its indecomposable projectives are in bijective correspondence with
those of $A$, and its Cartan matrix is given by $C_{\gL} = C_A + C_M^T$.
Thus, when $A$ and $M$ are triangular,
$(C_{\gL})_+ = C_A$ and $(C_{\gL})_{-} = C_M$.

\begin{theorem} \label{t:AM}
Let $A$ be a finite dimensional triangular algebra over a field and
let $_AM_A$ be a triangular $A$-$A$-bimodule. Set $\gL = A \ltimes DM$
to be the trivial extension of $A$ with the dual of $M$.

Then there exist, for $1 \leq i \leq n=\rank K_0(\per \gL)$,
triangulated functors $R_i : \dgL \to \dgL$ which restrict to $R_i :
\per \gL \to \per \gL$, such that:
\begin{enumerate}
\renewcommand{\theenumi}{\alph{enumi}}
\item
Each functor $R_i$ induces a linear map on $K_0(\per \gL)$ whose
matrix with respect to the basis of indecomposable projective
$\gL$-modules is $r_{n+1-i}^{C_{\gL}}$, cf.~\eqref{e:rBi}, where
$C_{\gL}$ is the Cartan matrix of $\gL$;

\item
The diagrams of triangulated functors
\begin{equation} \label{e:funcprod}
\xymatrix{
{\per \gL} \ar[d]_{- \tenl_{\gL} A_A}
\ar[r]^{R_1} &
{\per \gL} \ar[r]^{R_2} &
{\dots} \ar[r]^{R_n} &
{\per \gL} \ar[d]^{- \tenl_{\gL} A_A} \\
\dA \ar[rrr]^{- \tenl_A DM_A[1]} & & &
\dA
}
\end{equation}
and
\[
\xymatrix{
\dA \ar[d]_{- \ten_A A_{\gL}} \ar[rrr]^{- \tenl_A DM_A[1]} & & &
\dA \ar[d]^{- \ten_A A_{\gL}} \\
{\dgL} \ar[r]^{R_1} &
{\dgL} \ar[r]^{R_2} &
{\dots} \ar[r]^{R_n} &
{\dgL}
}
\]
commute up to a natural isomorphism of functors.
\end{enumerate}
\end{theorem}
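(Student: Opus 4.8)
The plan is to realize the reflection functors as two-sided twist functors attached to the vertices of $\gL$. For each $i$ let $e_i$ be the primitive idempotent with $\bar P_i = e_i\gL$ the corresponding indecomposable projective, and consider the $\gL$-bimodule $\gL e_i \ten_k e_i\gL$, which is a direct summand of $\gL \ten_k \gL$ and hence projective on both sides. Multiplication gives a bimodule map $\gL e_i \ten_k e_i\gL \to \gL$, and I would set $T_i = \mathrm{Cone}(\gL e_i \ten_k e_i\gL \to \gL)$ and $R_i = - \tenl_\gL T_i$. Since $T_i$ is a bounded complex of bimodules projective on each side, $R_i$ is a triangulated endofunctor of $\dgL$ preserving $\per\gL$, as required. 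For an object $X$ one has $R_i(X) = \mathrm{Cone}\bigl(Xe_i \ten_k e_i\gL \to X\bigr)$, so on the Grothendieck group $[R_iX] = [X] - \dim_k(Xe_i)\,[\bar P_i]$; evaluating on $X = \bar P_t$ and recalling that $\dim_k e_t\gL e_i$ is the corresponding entry of $C_\gL$ gives exactly the matrix $r^{C_\gL}_{n+1-i}$ of \eqref{e:rBi}. This establishes (a).

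For (b) I would first rewrite both squares as isomorphisms of bimodule complexes. Writing $R_n \circ \cdots \circ R_1 = - \tenl_\gL (T_1 \tenl_\gL \cdots \tenl_\gL T_n)$ and using that $DM$ is a square-zero ideal of $\gL$ (so that its $\gL$-actions factor through $\gL \to A$ and $A \tenl_A DM \cong DM$), the composite $(- \tenl_A DM[1]) \circ (- \tenl_\gL A)$ is identified with $- \tenl_\gL DM[1]$, where $DM$ is viewed as a $\gL$-$A$-bimodule. Hence the first square amounts to a natural isomorphism of $\gL$-$A$-bimodule complexes
\[
T_1 \tenl_\gL \cdots \tenl_\gL T_n \tenl_\gL A \;\cong\; DM[1],
\]
and the second square, by the same bookkeeping, to the left-handed statement $A \tenl_\gL T_1 \tenl_\gL \cdots \tenl_\gL T_n \cong DM[1]$. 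At the level of Grothendieck groups these reduce to the identity of Proposition~\ref{p:linalg} written in the cleared form $B_+\, r^B_1 \cdots r^B_n = -B_-^T$ with $B = C_\gL$, $B_+ = C_A$ and $B_-^T = C_M^T = C_{DM}$; thus the content of (b) is a lift of that scalar identity to an isomorphism of complexes.

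I would prove this isomorphism by a telescoping induction that mirrors the proof of Proposition~\ref{p:linalg}, in which the partial products $B_+ \cdot r^B_1 \cdots r^B_i$ have their first $i$ rows equal to those of $-B_-^T$ and their remaining rows equal to those of $B_+$. Categorically, I would construct $\gL$-$A$-bimodule complexes $N_0, \dots, N_n$ with $N_n = A$, $T_i \tenl_\gL N_i \cong N_{i-1}$, and $N_0 \cong DM[1]$, each $N_i$ being the explicit ``partially reflected'' bimodule interpolating between $A$ and $DM[1]$ predicted by the matrix computation. The inductive step unwinds the defining triangle of $T_i$ after tensoring with $N_i$ and tracks how the twist at the $i$-th vertex alters $N_i$; the triangularity hypotheses on $A$ and on $M$ (that $e_iAe_j = 0$ and $e_iMe_j = 0$ for $j < i$), together with the chosen order of the vertices, force the relevant $\mathbf{R}\Hom$/$\mathrm{Tor}$ terms between the $i$-th vertex and $N_i$ to collapse, so that $T_i$ modifies only the ``$i$-th row'' of $N_i$, exactly as in the scalar telescoping.

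The main obstacle I anticipate is precisely this control of the iterated derived tensor product $T_1 \tenl_\gL \cdots \tenl_\gL T_n$: a priori it is a large complex with many terms and nontrivial higher differentials, and the twists $T_i$ do not commute. The crux is to show that, for the triangular order of the vertices, all the higher $\mathrm{Tor}$-interactions between successive twists vanish, so that the product degenerates to the single shifted bimodule $DM[1]$. Verifying this vanishing, and checking that the two one-sided restrictions ($\,\cdot\, \tenl_\gL A$ and $A \tenl_\gL \,\cdot\,$) both collapse compatibly so that the two squares commute with matching natural isomorphisms, is where the triangularity of $A$ and $M$ is used in an essential way.
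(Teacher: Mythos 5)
Your part (a) is correct and is exactly the paper's argument (same complexes $C_i = (\gL e_i \ten_k e_i \gL \xrightarrow{m} \gL)$, same evaluation on projectives), and your reduction of part (b) to the bimodule isomorphisms $T^{\gL} \ten_{\gL} A \simeq DM[1]$ and $A \ten_{\gL} T^{\gL} \simeq DM[1]$, where $T^{\gL} = C_1 \ten_{\gL} \cdots \ten_{\gL} C_n$, is precisely Corollary~\ref{c:QIS}. The gap is that your proposed proof of this isomorphism — the telescoping induction through intermediate complexes $N_0, \dots, N_n$ — is not carried out, and the two mechanisms you invoke to make it work fail as stated. First, the $N_i$ are not ``predicted by the matrix computation'': the scalar telescoping determines only the classes $[N_i] \in K_0$, never the quasi-isomorphism type, and since the partial-product matrices have entries of mixed sign, the $N_i$ are genuine complexes with homology in several degrees and no evident closed form (already $N_{n-1} = C_n \ten_{\gL} A = (\gL e_n \ten e_n A \to A)$ has homology in two degrees in general). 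Second, the claimed triangularity-driven collapse at each step is unavailable: triangularity of $A$ and $M$ does \emph{not} make $\gL$ triangular — its Cartan matrix $C_{\gL} = C_A + C_M^T$ is full, so $e_i \gL e_j \neq 0$ in both directions and $R_i^{\gL}$ does not act ``row-locally'' on $N_i$; no $\mathbf{R}\!\Hom$/$\mathrm{Tor}$ term between vertex $i$ and $N_i$ is forced to vanish. Relatedly, the ``higher Tor-interactions'' you flag as the crux are a red herring: since each $C_i$ is projective on both sides, $\tenl = \ten$ and the full product is the completely explicit bar-type complex of Lemma~\ref{l:T}, with nonzero terms in all degrees $0, \dots, -n$; it never degenerates termwise at an intermediate stage — each peel merely adds a $DM$-correction via $\gL e_i = A e_i \oplus DM e_i$, and these corrections must be tracked globally.

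What actually closes the argument in the paper is a decomposition your sketch is missing. Step one is the case $M = 0$: for triangular $A$ one has $R_i^A(P_i) \simeq 0$ and $R_i^A(P_j) \simeq P_j$ for $j > i$ (Lemma~\ref{l:RiPj}), so applying $R_1^A, R_2^A, \dots$ in order kills $P_1, P_2, \dots$ successively and $T^A$ is contractible as a complex of one-sided modules (Proposition~\ref{p:TA}) — this is the only place a telescoping occurs, and it happens over $A$, not over $\gL$. Step two is a single global comparison: triangularity of $M$ gives $e_i DM e_j = 0$, hence $e_i \gL e_j = e_i A e_j$, exactly for $i < j$, which are precisely the adjacencies appearing in the strictly increasing bar tuples; this yields a termwise identification away from degree $0$ and a short exact sequence of complexes $0 \to DM \to \gL \ten_A T^A \to T^{\gL} \ten_{\gL} A \to 0$ (Proposition~\ref{p:SES}). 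Since the middle term is contractible by step one, the associated triangle gives $T^{\gL} \ten_{\gL} A \simeq DM[1]$ in $\cD^b(\gL^{op} \ten A)$ at one stroke, and symmetrically on the other side. So the cancellation is inherited from the triangular subalgebra $A$ after isolating $DM$ in degree $0$, not produced vertex by vertex inside $\per \gL$; to complete your proof you would need to replace the stepwise collapse by this (or an equivalent) comparison with the $M = 0$ case.
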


The vertical arrows of~\eqref{e:funcprod} induce an isomorphism
$K_0(\per \gL) \to K_0(A)$ sending projectives to projectives. Thus, by
considering the diagram~\eqref{e:funcprod} at the level of the
Grothendieck groups, we get the following commutative diagram
\[
\xymatrix{
K_0(\per \gL) \ar[d]_{I_n} \ar[r]^{r^{C_{\gL}}_n} &
K_0(\per \gL) \ar[r]^(0.6){r^{C_{\gL}}_{n-1}} &
{\dots} \ar[r]^(0.4){r^{C_{\gL}}_1} &
K_0(\per \gL) \ar[d]^{I_n} \\
K_0(A) \ar[rrr]^{-(C_{\gL})_{+}^{-1} (C_{\gL})_{-}^T} & & & K_0(A)
}
\]
(where $I_n$ is the $n \times n$ identity matrix), which explains why
the theorem can be seen as a categorical interpretation
of~\eqref{e:prod} for $B = C_{\gL}$.

To complement this result, we note that any integral square matrix $B$
with non-negative entries and positive entries on its main diagonal can
be realized as a Cartan matrix of a suitable $\gL$ as in the theorem.
More precisely, there exist a finite dimensional triangular algebra $A$
and a triangular bimodule $M$ over $A$ with $B_+ = C_A$ and $B_- =
C_M$, see Section~\ref{ssec:realize} for the details.

\subsection{Application to Serre functors}

A triangular finite dimensional algebra $A$ is of finite global
dimension, hence its bounded derived category $\dA$ admits a Serre
functor $\nu_A$ in the sense of Bondal and
Kapranov~\cite{BondalKapranov89}. By a result of
Happel~\cite{Happel88}, it is given by the left derived functor of the
Nakayama functor, $\nu_A = - \tenl_A DA$.
Thus, by taking in Theorem~\ref{t:AM} the bimodule $M$ to be $A$, we deduce
the following result on the Serre functor on $\dA$.

\begin{cor} \label{c:Serre}
Let $A$ be a finite dimensional triangular algebra over a field and
let $T(A) = A \ltimes DA$ be its trivial extension algebra.
Then there exist, for $1 \leq i \leq n=\rank K_0(A)$, triangulated
autoequivalences $R_i$ on $\cD^b(T(A))$ which restrict to
autoequivalences on $\per T(A)$, such that:
\begin{enumerate}
\renewcommand{\theenumi}{\alph{enumi}}
\item
Each autoequivalence $R_i$ induces a linear map on $K_0(\per T(A))$,
whose matrix with respect to the basis of indecomposable projective
$T(A)$-modules is given by the reflection $r_{n+1-i}^B$,
where $B$ is the symmetrization of the Cartan matrix of $A$;

\item
The diagrams of triangulated functors
\begin{align}
\label{e:Serreprod}
\xymatrix{
{\per T(A)} \ar[d]_{- \tenl_{T(A)} A_A}
\ar[r]^{R_1} &
{\per T(A)} \ar[r]^(0.6){R_2} &
{\dots} \ar[r]^(0.4){R_n} &
{\per T(A)} \ar[d]^{- \tenl_{T(A)} A_A} \\
\dA \ar[rrr]^{\nu_A[1]} & & &
\dA
}
\\
\intertext{and}
\notag
\xymatrix{
\dA \ar[d]_{- \ten_A A_{T(A)}} \ar[rrr]^{\nu_A[1]} & & &
\dA \ar[d]^{- \ten_A A_{T(A)}} \\
{\cD^b(T(A))} \ar[r]^{R_1} &
{\cD^b(T(A))} \ar[r]^(0.6){R_2} &
{\dots} \ar[r]^(0.4){R_n} &
{\cD^b(T(A))}
}
\end{align}
commute up to a natural isomorphism of functors.
\end{enumerate}
\end{cor}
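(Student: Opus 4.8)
The plan is to derive Corollary~\ref{c:Serre} as the special case of Theorem~\ref{t:AM} obtained by taking the bimodule $M$ to be the regular bimodule ${}_AA_A$. First I would check that the hypotheses are satisfied: since $A$ is triangular, the idempotents $e_1,\dots,e_n$ satisfy $e_iAe_j=0$ for $j<i$, so $M={}_AA_A$ is a triangular bimodule with $C_M=C_A$. The theorem then applies and yields $\gL = A \ltimes DM = A \ltimes DA = T(A)$ together with triangulated functors $R_i$ on $\cD^b(T(A))$ that restrict to $\per T(A)$.

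Next I would translate the two conclusions of the theorem into the assertions of the corollary. For part~(a) the Cartan matrix of $\gL$ is $C_{\gL}=C_A+C_M^T=C_A+C_A^T$, which is precisely the symmetrization $B$ of $C_A$; hence the matrix $r_{n+1-i}^{C_{\gL}}$ produced by Theorem~\ref{t:AM}(a) is the reflection $r_{n+1-i}^{B}$, as required. For part~(b) the horizontal functor $-\tenl_A DM_A[1]$ of the theorem becomes $-\tenl_A DA_A[1]$, and by Happel's description of the Serre functor~\cite{Happel88} one has $\nu_A = -\tenl_A DA$, so this functor is exactly $\nu_A[1]$. Substituting this identification into the two commuting squares of Theorem~\ref{t:AM}(b) reproduces the two diagrams of part~(b) of the corollary, the first being~\eqref{e:Serreprod}.

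The only claim that is not a formal consequence of this specialization is that each $R_i$ is an \emph{autoequivalence}, rather than merely a triangulated endofunctor, of both $\cD^b(T(A))$ and $\per T(A)$. This is where the distinctive feature of the case $M=A$ must enter: the trivial extension $T(A)=A\ltimes DA$ is a \emph{symmetric} algebra, so there is a bimodule isomorphism $D\,T(A)\simeq T(A)$. The shadow of invertibility at the level of Grothendieck groups is already visible, since in the symmetric case $B_{ii}=2$ and therefore each $r_{n+1-i}^{B}$ is a genuine reflection, hence an involution, whereas for a general triangular bimodule the matrices $r_i^{C_{\gL}}$ need not be invertible. I expect the main work to consist of upgrading this to the categorical level: using the symmetry $D\,T(A)\simeq T(A)$ to produce an explicit quasi-inverse for each $R_i$ — plausibly a reflection functor of the same type attached to the same vertex — so that $R_i$ becomes invertible on $\cD^b(T(A))$ and, since it preserves perfect complexes, also on $\per T(A)$. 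Establishing this invertibility, and in particular checking that the symmetric structure furnishes the requisite two-sided inverse, is the step I anticipate to be the main obstacle.
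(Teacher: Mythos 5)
Your reduction is exactly the paper's: set $M={}_AA_A$ (triangular since $A$ is), so $\gL=A\ltimes DA=T(A)$, $C_{T(A)}=C_A+C_A^T=B$, and $-\tenl_A DM_A[1]=\nu_A[1]$ by Happel's theorem; parts (a) and (b) then follow formally from Theorem~\ref{t:AM}. You also correctly isolate the one claim that does not follow formally, namely that each $R_i$ is an autoequivalence rather than merely an endofunctor.

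But you leave that step unresolved, and your proposed route to it is subtly off. The paper dispatches it by citation: since $T(A)$ is symmetric and $\dim_k e_i T(A) e_i=2$ (here triangularity gives $e_iT(A)e_i=e_iAe_i\oplus e_i(DA)e_i\simeq k\oplus k$), the two-term complex $C_i=\bigl(T(A)e_i\ten_k e_iT(A)\to T(A)\bigr)$ is a two-sided tilting complex by Remark~4.3 of Hoshino--Kato~\cite{HoshinoKato02} (see also Theorem~4.1 of Rouquier--Zimmermann~\cite{RouquierZimmermann03}); equivalently, $R_i$ is the Seidel--Thomas twist along the $0$-spherical object $e_iT(A)$. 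Note that the $\dim_k e_iT(A)e_i=2$ condition is essential and is not a consequence of symmetry alone, so flagging only the isomorphism $D\,T(A)\simeq T(A)$ is not enough. Moreover, your guess that the quasi-inverse is ``a reflection functor of the same type attached to the same vertex'' would make $R_i$ an involution, which is false at the categorical level: the involutivity of $r^B_{n+1-i}$ on $K_0$ does not lift, and indeed Rouquier--Zimmermann obtain \emph{braid group} actions from these functors precisely because they have infinite order. The actual quasi-inverse is the dual twist, given by tensoring with the reversed two-term complex $T(A)\to T(A)e_i\ten_k e_iT(A)$ whose map is the comultiplication coming from the symmetrizing form, not $R_i$ itself. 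So the skeleton of your argument coincides with the paper's, but the invertibility step --- which you name as the main obstacle --- is a genuine gap as written, and the paper's proof fills it with the cited tilting-complex result rather than with the self-inverse mechanism you anticipate.
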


Thus, one can lift (a shift of) the Serre functor on $\dA$ to a product
of the ``reflections'' $R_i$ in $\per T(A)$. As before, the
diagram~\eqref{e:Serreprod} can be regarded as a categorical
interpretation of equation~\eqref{e:coxprod} for $C=C_A$, the Cartan
matrix of $A$. This can be done for any upper triangular integral
matrix $C$ with non-negative entries and $1$ on its main diagonal,
see Section~\ref{ssec:realize}.

\subsection{On the proof}

Section~\ref{sec:proof} is devoted to the proof of the theorem and its
corollaries. A key ingredient in the proof is the proper definition and
analysis of the functors $R_i$. They are defined, for each $1 \leq i
\leq n$, as the left derived functors of tensoring with a two-term complex
of bimodules,
\[
R_i^{\gL} =
- \tenl_{\gL} \left( \gL e_i \ten_k e_i \gL \xrightarrow{m} \gL \right)
\]
where $m$ denotes the multiplication map and $e_1,\dots,e_n$ are the
primitive orthogonal idempotents.

The functors $R_i^{\gL}$ have already been considered in the works of
Rouquier-Zimmermann~\cite{RouquierZimmermann03} on braid group actions
on derived categories of Brauer tree algebras without exceptional
vertex, and by Hoshino and Kato~\cite{HoshinoKato02} in relation with
constructions of two-sided tilting complexes for self-injective
algebras. When the algebra $\gL$ is symmetric and $\dim e_i \gL e_i =
2$, the functor $R_i^{\gL}$ can be viewed as a twist functor in the
sense of Seidel and Thomas~\cite{SeidelThomas01} with respect to the
$0$-spherical object $e_i \gL$. Our result shows the importance of the
functors $R_i^{\gL}$ for a wider class of algebras $\gL$, which are not
necessarily restricted to be self-injective or symmetric.

In the course of the proof we establish the special case of
Theorem~\ref{t:AM} where the bimodule $M$ is zero, namely that for any
finite-dimensional triangular algebra $A$, the composition $R_n^A \cdot
\ldots \cdot R_2^A \cdot R_1^A$ is isomorphic to zero on $\dA$, see
Section~\ref{ssec:triangular}.

Plugging in this statement the definition of $R_i^A$, we obtain a (finite)
projective resolution of the triangular algebra $A$ as a bimodule over
itself. A similar construction, with relation to Hochschild cohomology
computations, was presented by Cibils in~\cite{Cibils89}.

\subsection{Previous work}
Another categorical interpretation of~\eqref{e:coxprod}, in the realm
of representation theory of quivers, is given by a result of
Gabriel~\cite{Gabriel80}, correcting previous paper by Brenner and
Butler~\cite{BrennerButler76}.

For a quiver $Q$ without oriented cycles, one can consider two exact
autoequivalences on the bounded derived category of its path algebra.
The first is the Auslander-Reiten translation, corresponding to the
left hand side of~\eqref{e:coxprod}, and the second is the so-called
Coxeter functor, which was defined by Bernstein, Gelfand and
Ponomarev~\cite{BGP73} as a product of their reflection functors,
corresponding to the right hand side of~\eqref{e:coxprod}.

In~\cite{Gabriel80}, it is shown that for any quiver whose underlying
graph is a tree, or more generally, does not contain a cycle of odd
length, the Auslander-Reiten translation is isomorphic to the Coxeter
functor, thus interpreting the equality in~\eqref{e:coxprod} as an
isomorphism of functors.

In Section~\ref{sec:quivers} we explain this result in more detail and
compare it with our approach.

Another approach to the factorization of Serre functors for certain
finite dimensional algebras, including ones arising from category
$\cO$ associated to semi-simple complex Lie algebras, is presented
by Mazorchuk and Stroppel in~\cite{MazorchukStroppel08}.

\subsection*{Acknowledgements}

The results in this paper were first presented at the workshop on Spectral
methods in representation theory of algebras and applications to the study of
rings of singularities that was held at Banff in September 2008. I would like
to thank J.\ A.\ de la Pe\~{n}a, C.~Ringel and C.~Stroppel for their
helpful comments and suggestions.

\section{Proof of the theorem}
\label{sec:proof}

\subsection{The building blocks -- the functors $R_i^{\gL}$}
Let $\gL$ be a basic finite dimensional algebra over a field $k$ and
let $P_1, \dots, P_n$ be a complete collection of the non-isomorphic
indecomposable projectives in $\modf \gL$, the category of finite
dimensional right $\gL$-modules. Let $e_1, \dots, e_n$ be primitive
orthogonal idempotents in $\gL$ such that $P_i = e_i \gL$ for $1 \leq i
\leq n$.

Fix $1 \leq i \leq n$ and consider the following complex of
$\gL$-$\gL$-bimodules
\[
C_i = \gL e_i \otimes_k e_i \gL \xrightarrow{m} \gL ,
\]
where $\gL$ is in degree 0 and $m$ is the multiplication map. Taking
the tensor product $- \ten_{\gL} C_i$ yields an endofunctor on the
category $\cC^b(\gL)$ of bounded complexes of finite dimensional right
$\gL$-modules, which induces an endofunctor on its homotopy category
$\cK^b(\gL)$.

Since its terms are projective as left $\gL$-modules, the complex
$C_i$ defines a triangulated functor
\[
- \tenl_{\gL} C_i = - \ten_{\gL} C_i : \dgL \to \dgL .
\]
on the derived category $\dgL$ of $\modf \gL$.
Moreover, as the terms are also projective as right $\gL$-modules, this functor
restricts to a functor
\[
- \tenl_{\gL} C_i = - \ten_{\gL} C_i : \per \gL \to \per \gL
\]
on the triangulated subcategory $\per \gL$ of complexes quasi-isomorphic
to perfect ones (that is, bounded complexes of finitely generated
projectives).

In the sequel, when no confusion arises, we shall denote all the above
functors by $R^{\gL}_i$. These functors were considered by Rouquier and
Zimmermann~\cite{RouquierZimmermann03} in relation with braid group
actions on the derived categories of Brauer tree algebras with no
exceptional vertex, and by Hoshino and Kato~\cite{HoshinoKato02} in
relation with constructions of two-sided tilting complexes for
self-injective algebras.

\begin{lemma} \label{l:RiX}
Let $X \in \modf \gL$. Then
\[
R_i^{\gL}(X) = \Hom_{\gL}(P_i, X) \otimes_k P_i \xrightarrow{ev} X
\]
where $ev$ is the evaluation map $ev : \alpha \otimes y \mapsto
\alpha(y)$.
\end{lemma}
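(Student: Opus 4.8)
The plan is to unwind the definition $R_i^{\gL}(X) = X \ten_{\gL} C_i$ and to identify, by standard natural isomorphisms, each of the two terms of the resulting complex, and then to check that the induced differential is precisely the evaluation map. Concretely, first I would write $R_i^{\gL}(X)$ as the two-term complex
\[
X \ten_{\gL} (\gL e_i \ten_k e_i \gL) \xrightarrow{\ \mathrm{id}_X \ten m\ } X \ten_{\gL} \gL ,
\]
with the right-hand term placed in degree $0$. The degree-$0$ term is immediately $X \ten_{\gL} \gL \cong X$ via the unit isomorphism $x \ten \lambda \mapsto x\lambda$, so no work is needed there.

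For the degree-$(-1)$ term, I would exploit that tensoring over $k$ with the fixed $k$-vector space $e_i \gL$ commutes with the functor $- \ten_{\gL} \gL e_i$, yielding a natural isomorphism
\[
X \ten_{\gL} (\gL e_i \ten_k e_i \gL) \cong (X \ten_{\gL} \gL e_i) \ten_k e_i \gL .
\]
Then the idempotent adjunction gives $X \ten_{\gL} \gL e_i \cong X e_i \cong \Hom_{\gL}(e_i \gL, X) = \Hom_{\gL}(P_i, X)$, where an element $\xi \in X e_i$ corresponds to the homomorphism $c \mapsto \xi c$ on $P_i = e_i \gL$. Composing, the degree-$(-1)$ term is identified with $\Hom_{\gL}(P_i, X) \ten_k P_i$, as desired.

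The crux of the argument, and the step I expect to require the most care, is verifying that under these identifications the differential $\mathrm{id}_X \ten m$ becomes the evaluation map $ev$. To do this I would trace a generator $x \ten (a \ten b)$, with $a \in \gL e_i$ and $b \in e_i \gL$, through both routes. Along the degree-$0$ side the multiplication $m$ sends it to $x \ten ab \mapsto xab \in X$, while along the degree-$(-1)$ identifications it maps to $\alpha_{xa} \ten b$, where $\alpha_{xa} \in \Hom_{\gL}(P_i, X)$ is the homomorphism $c \mapsto xac$. Applying $ev$ gives $\alpha_{xa}(b) = xab$, matching the first computation. Since every isomorphism used above is natural in $X$, this verification identifies the differential with $ev$ and completes the proof; the only subtlety is the careful bookkeeping of how the left factor $a \in \gL e_i$ is absorbed into $X$ when passing across the tensor product over $\gL$.
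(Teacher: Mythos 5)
Your proposal is correct and follows essentially the same route as the paper, whose entire proof is the single line ``$X \ten_{\gL} \gL e_i \simeq X e_i \simeq \Hom_{\gL}(e_i \gL, X)$''; you simply make explicit the compatibility of these natural identifications with the differential $\mathrm{id}_X \ten m$, which the paper leaves implicit. The element-level check that $x \ten a \ten b \mapsto \alpha_{xa} \ten b \mapsto xab$ agrees with the multiplication route is accurate and is exactly the bookkeeping the word ``Clearly'' suppresses.
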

\begin{proof}
Clearly, $X \ten_{\gL} \gL e_i \simeq X e_i \simeq \Hom_{\gL}(e_i \gL,
X)$.
\end{proof}

The Grothendieck group $K_0(\per \gL)$ is a free abelian group on the
generators $[P_1],\dots,[P_n]$ equipped with a bilinear form induced
by the Euler form
\begin{align*}
\bform{X}{Y}_{\gL} = \sum_{r \in \bZ} (-1)^r \dim_k \Hom_{\dgL}(X,
Y[r]) & & X, Y \in \per \gL.
\end{align*}

\begin{cor} \label{c:RXrx}
Let $X \in \per \gL$. Then in $K_0(\per \gL)$ we have
\[
[R_i^{\gL}(X)] = [X] - \langle P_i, X \rangle_{\gL} [P_i] .
\]
\end{cor}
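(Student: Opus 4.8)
The plan is to verify the identity first for honest modules via Lemma~\ref{l:RiX} and then extend it to all of $\per \gL$ by additivity. Since $R_i^{\gL}$ is a triangulated functor, it carries distinguished triangles to distinguished triangles, so the assignment $[X] \mapsto [R_i^{\gL}(X)]$ is a well-defined group endomorphism of $K_0(\per \gL)$; likewise $[X] \mapsto [X] - \bform{P_i}{X}_{\gL}[P_i]$ is a group endomorphism, because the Euler form $\bform{P_i}{\cdot}_{\gL}$ is additive on triangles. As the classes $[P_1],\dots,[P_n]$ of the indecomposable projectives form a basis of $K_0(\per \gL)$, it suffices to check the equality when $X = P_j$ is one of these projectives, that is, an object of $\modf \gL$.

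For such a module $X$ I would invoke Lemma~\ref{l:RiX}, which presents $R_i^{\gL}(X)$ as the two-term complex
\[
\Hom_{\gL}(P_i, X) \ten_k P_i \xrightarrow{ev} X ,
\]
with $X$ placed in degree $0$ and the left-hand term in degree $-1$, matching the position of $\gL e_i \ten_k e_i \gL$ in $C_i$. Passing to the Grothendieck group, the class of a bounded complex is the alternating sum of the classes of its terms, so
\[
[R_i^{\gL}(X)] = [X] - [\Hom_{\gL}(P_i, X) \ten_k P_i] .
\]

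Next I would identify the second summand. The space $\Hom_{\gL}(P_i, X)$ is a finite-dimensional $k$-vector space, so $\Hom_{\gL}(P_i, X) \ten_k P_i$ is a direct sum of $\dim_k \Hom_{\gL}(P_i, X)$ copies of $P_i$, giving
\[
[\Hom_{\gL}(P_i, X) \ten_k P_i] = \dim_k \Hom_{\gL}(P_i, X) \cdot [P_i] .
\]
Finally, because $P_i$ is projective, $\Hom_{\dgL}(P_i, X[r])$ vanishes for $r \neq 0$ and equals $\Hom_{\gL}(P_i, X)$ for $r = 0$; hence $\bform{P_i}{X}_{\gL} = \dim_k \Hom_{\gL}(P_i, X)$, and combining the three displays yields the claim for modules, whence for all of $\per \gL$.

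I do not expect a serious obstacle here, as the computation is essentially bookkeeping in $K_0$. The two points demanding care are (i) correctly recording the cohomological degree of the term $\Hom_{\gL}(P_i, X) \ten_k P_i$, which is what produces the minus sign in the alternating sum, and (ii) the reduction to modules, where one must observe that \emph{both} sides of the asserted identity are additive in $[X]$, so that verifying the formula on the basis of projective classes already establishes it in general.
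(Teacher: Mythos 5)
Your proposal is correct and follows exactly the paper's route: the paper likewise reduces to the basis classes $[P_j]$ using that $R_i^{\gL}$ is triangulated and then invokes Lemma~\ref{l:RiX}, leaving implicit the bookkeeping (alternating sum in $K_0$, the identification $[\Hom_{\gL}(P_i,X)\ten_k P_i]=\dim_k\Hom_{\gL}(P_i,X)\,[P_i]$, and $\bform{P_i}{X}_{\gL}=\dim_k\Hom_{\gL}(P_i,X)$ by projectivity of $P_i$) that you spell out. Your two points of care are both well taken and are precisely the details the paper's terse proof omits.
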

\begin{proof}
Since $R_i^{\gL}$ is triangulated, it is enough to verify this equality
on the basis elements $[P_j]$. This follows directly from Lemma~\ref{l:RiX}.
\end{proof}

The next lemma provides an explicit description of compositions of
functors $R^{\gL}_i$, which will be useful in the sequel.

\begin{lemma} \label{l:T}
Let $s \geq 1$ and let $\vphi : \{1, \dots, s\} \to \{1, \dots, n\}$ be
any function. Then
\[
R^{\gL}_{\vphi(s)} \cdot \ldots \cdot R^{\gL}_{\vphi(1)} = -
\tenl_{\gL} T^{\gL}_{\vphi}
\]
for the complex $T^{\gL}_{\vphi}$ of $\gL$-$\gL$-bimodules given by
\[
T^{\gL}_{\vphi} = \dots \to 0 \to T^{\gL,s}_{\vphi}
\xrightarrow{d^s_\vphi} \dots \to T^{\gL,r}_{\vphi}
\xrightarrow{d^r_\vphi} T^{\gL,r-1}_{\vphi} \to \dots
\xrightarrow{d^1_\vphi} T^{\gL, 0}_{\vphi} \to 0 \to \dots
\]
where
\begin{align} \label{e:Tr}
T^{\gL,0}_\vphi = \gL &,&  T^{\gL,r}_{\vphi} = \bigoplus_{1 \leq i_1 <
\dots < i_r \leq s} \gL e_{\vphi(i_1)} \otimes e_{\vphi(i_1)} \gL
e_{\vphi(i_2)} \otimes \dots \otimes e_{\vphi(i_r)} \gL
\end{align}
with the differentials $d^r_{\vphi}$ defined on each summand by
\begin{equation} \label{e:dTr}
d^r_{\vphi}(\lambda_0 \ten \lambda_1 \ten \dots \ten \lambda_r) =
\sum_{j=0}^{r-1} (-1)^j \lambda_0 \ten \dots \ten \lambda_j
\lambda_{j+1} \ten \dots \ten \lambda_r
\end{equation}
where $\lambda_0 \in \gL e_{\vphi(i_1)}$, $\lambda_r \in e_{\vphi(i_r)}
\gL$ and $\lambda_j \in e_{\vphi(i_j)} \gL e_{\vphi(i_{j+1})}$ for $0 <
j < r$.
\end{lemma}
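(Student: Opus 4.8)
The plan is to prove the formula for the composition by induction on $s$, using the defining two-term complex $C_i$ as the base case and building up the tensor product $C_{\vphi(s)} \ten_{\gL} C_{\vphi(s-1)} \ten_{\gL} \dots \ten_{\gL} C_{\vphi(1)}$ step by step. The key observation is that since each $C_i$ is a complex of bimodules that are projective on both sides, the composition of the functors $-\tenl_{\gL} C_i$ is simply $-\tenl_{\gL}(C_{\vphi(s)} \ten_{\gL} \dots \ten_{\gL} C_{\vphi(1)})$, so it suffices to identify this iterated tensor product of two-term complexes with the complex $T^{\gL}_{\vphi}$ described in the statement.

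First I would compute the tensor product of the two-term complexes $C_{\vphi(1)}, \dots, C_{\vphi(s)}$ directly. Each factor $C_{\vphi(j)} = (\gL e_{\vphi(j)} \ten_k e_{\vphi(j)} \gL \to \gL)$ contributes, in each tensor degree, either the ``top'' term $\gL e_{\vphi(j)} \ten_k e_{\vphi(j)} \gL$ (homological degree $1$) or the term $\gL$ (homological degree $0$). Taking the total complex of the $s$-fold tensor product, the term in homological degree $r$ is the direct sum over all choices of $r$ indices $1 \le i_1 < \dots < i_r \le s$ at which we pick the degree-$1$ term, with the remaining factors contributing copies of $\gL$. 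After collapsing the repeated tensor factors of $\gL$ using the canonical isomorphisms $\gL \ten_{\gL} N \simeq N$ and $N \ten_{\gL} \gL \simeq N$, each such summand simplifies exactly to $\gL e_{\vphi(i_1)} \ten e_{\vphi(i_1)} \gL e_{\vphi(i_2)} \ten \dots \ten e_{\vphi(i_r)} \gL$, matching~\eqref{e:Tr}. I would then check that the differential of the total complex, which is the alternating sum of the internal differentials of the factors, reduces under these identifications precisely to the multiplication maps in~\eqref{e:dTr}, with signs governed by the Koszul sign rule applied to the positions of the chosen indices.

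The main obstacle I expect is the careful bookkeeping of signs and indices: matching the signs coming from the Koszul rule in the total complex of an iterated tensor product against the explicit signs $(-1)^j$ in~\eqref{e:dTr}, and verifying that the collapsing of the $\gL$-factors identifies the differentials correctly on each summand. One must confirm that when the multiplication map $m$ of the factor $C_{\vphi(i_{j+1})}$ acts, the adjacent $\gL$-collapses turn it into the map contracting $\lambda_j \ten \lambda_{j+1}$ to $\lambda_j \lambda_{j+1}$, and that the term $d^r_\vphi$ correctly omits the contributions at the two extreme ends (where multiplication by $\gL$ is absorbed rather than producing a new term). This is a routine but delicate induction; the cleanest route is to establish the isomorphism $T^{\gL}_{\vphi} \simeq C_{\vphi(s)} \ten_{\gL} T^{\gL}_{\vphi'}$ for $\vphi' = \vphi|_{\{1,\dots,s-1\}}$ and observe that tensoring the two-term complex $C_{\vphi(s)}$ with $T^{\gL}_{\vphi'}$ introduces exactly one new index $s$ into each summand, thereby recovering the full complex with the stated differential.
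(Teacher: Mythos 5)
Your overall strategy is exactly the paper's: use projectivity of the terms of $C_i$ on both sides to replace the composition of derived tensor functors by a single functor $-\tenl_{\gL}(\text{iterated tensor product})$, then identify that iterated tensor product with $T^{\gL}_{\vphi}$ by induction on $s$, collapsing the intermediate $\gL$-factors and tracking Koszul signs. With one correction (below) your argument reproduces the paper's proof.

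The concrete error is the order of the tensor factors. For right $\gL$-modules, applying $R^{\gL}_{\vphi(1)}$ first and $R^{\gL}_{\vphi(s)}$ last gives $X \mapsto \bigl(\cdots\bigl(X \ten_{\gL} C_{\vphi(1)}\bigr) \ten_{\gL} \cdots\bigr) \ten_{\gL} C_{\vphi(s)}$, so the composite is $- \tenl_{\gL}\bigl(C_{\vphi(1)} \ten_{\gL} \cdots \ten_{\gL} C_{\vphi(s)}\bigr)$ with $C_{\vphi(1)}$ \emph{leftmost}, not $C_{\vphi(s)} \ten_{\gL} \cdots \ten_{\gL} C_{\vphi(1)}$ as you wrote. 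Accordingly, the inductive step must establish $T^{\gL}_{\vphi} \simeq T^{\gL}_{\vphi'} \ten_{\gL} C_{\vphi(s)}$, with the new factor on the \emph{right}; your proposed isomorphism $T^{\gL}_{\vphi} \simeq C_{\vphi(s)} \ten_{\gL} T^{\gL}_{\vphi'}$ is false in general, since tensoring on the left places the new idempotent at the left end of each word, producing summands $\gL e_{\vphi(s)} \ten e_{\vphi(s)} \gL e_{\vphi(i_1)} \ten \cdots$, whereas in $T^{\gL,r}_{\vphi}$ the indices increase from left to right and the index $s$ can only occupy the rightmost slot; the reversed words are genuinely different bimodules. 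Once the order is fixed, your bookkeeping goes through as in the paper: the degree $-r$ term of $T^{\gL}_{\vphi'} \ten_{\gL} C_{\vphi(s)}$ splits into the summands with $i_r \leq s-1$ (coming from $T^{\gL,r}_{\vphi'}$) and those with $i_r = s$ (coming from $T^{\gL,r-1}_{\vphi'} \ten_{\gL} \gL e_{\vphi(s)} \ten e_{\vphi(s)} \gL$), and on the latter the differential is $d^{r-1}_{\vphi'} \ten 1$ together with $(-1)^{r-1}$ times the contraction induced by $m$, which is precisely the $j = r-1$ term of~\eqref{e:dTr}. One further small inaccuracy: nothing is ``omitted at the two extreme ends'' of $d^r_{\vphi}$ --- a degree $-r$ summand has $r+1$ tensor factors $\lambda_0, \dots, \lambda_r$, and all $r$ adjacent contractions appear in~\eqref{e:dTr}: the internal differential $m$ of the chosen factor at position $i_{j+1}$ becomes, after collapsing, the contraction $\lambda_j \ten \lambda_{j+1} \mapsto \lambda_j \lambda_{j+1}$ with Koszul sign $(-1)^j$ (there are exactly $j$ degree $-1$ factors to its left). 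The end factors $\lambda_0$ and $\lambda_r$ are never dropped, only multiplied, so no bar-resolution-style end terms arise.
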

\begin{proof}
By definition,
\begin{align*}
R^{\gL}_{\vphi(s)} \cdot \ldots \cdot R^{\gL}_{\vphi(2)} \cdot
R^{\gL}_{\vphi(1)} &= \Bigl( \dots \Bigl( \Bigl( - \tenl_{\gL}
C_{\vphi(1)} \Bigr) \tenl_{\gL} C_{\vphi(2)} \Bigr) \dots \tenl_{\gL}
C_{\vphi(s)} \Bigr) \\
&= - \tenl_{\gL} \Bigl( C_{\vphi(1)} \ten_{\gL} C_{\vphi(2)} \ten_{\gL}
\dots \ten_{\gL} C_{\vphi(s)} \Bigr)
\end{align*}
(where we replaced $\tenl$ by $\ten$ since the terms of $C_i$ are
projective as left (as well as right) modules), so it is enough to show
that
\[
T^{\gL}_{\vphi} = \Bigl( \dots \Bigl(C_{\vphi(1)} \ten_{\gL}
C_{\vphi(2)} \Bigr) \ten_{\gL} \dots \ten_{\gL} C_{\vphi(s)} \Bigr)
\]
where the right hand side is an iterated tensor product of complexes.

We prove this by induction on $s$, the case $s=1$ being merely the
definition of $R^{\gL}_{\vphi(1)}$. Now assume the claim for $s$,
consider a function $\vphi : \{1, \dots, s+1 \} \to \{1, \dots, n\}$
and denote by $\vphi'$ its restriction to $\{1, \dots, s\}$. By the
induction hypothesis, we need to show that $T^{\gL}_{\vphi} =
T^{\gL}_{\vphi'} \ten_{\gL} C_{\vphi(s+1)}$.

Recall that the tensor product of two complexes $X_{\gL}$ and $_{\gL}Y$
is defined by
\[
(X \ten_{\gL} Y)^m = \bigoplus_{p+q=m} X^p \ten_{\gL} Y^q
\]
with the differentials $d(x \ten y) = d(x) \ten y + (-1)^p x \ten d(y)$
for $x \in X^p$, $y \in Y^q$. It follows that for any $0 \leq r \leq
s+1$, the term at degree $-r$ of $T^{\gL}_{\vphi'} \ten_{\gL}
C_{\vphi(s+1)}$ equals
\[
T^{\gL,r}_{\vphi'} \oplus \left( T^{\gL,r-1}_{\vphi'} \ten_{\gL}
\left(\gL e_{\vphi(s+1)} \ten e_{\vphi(s+1)} \gL \right) \right)
\]
where the left summand vanishes for $r=s+1$ and the right vanishes for
$r=0$. Expanding these summands according to~\eqref{e:Tr}, we get a sum
over all the $r$-tuples $1 \leq i_1 < \dots < i_r \leq s+1$, where the
left summand corresponds to the tuples with $i_r \leq s$ while the
right to the tuples with $i_r = s+1$. Hence the term equals
$T^{\gL,r}_{\vphi}$.

Concerning the differentials, we have the following picture
\[
\xymatrix@C=0pt{
T^{\gL,r}_{\vphi} &=  &
T^{\gL,r}_{\vphi'} \ar[d]^{d^r_{\vphi'}} & \oplus &
T^{\gL,r-1}_{\vphi'} \ten_{\gL}
\left(\gL e_{\vphi(s+1)} \ten e_{\vphi(s+1)} \gL \right)
\ar[d]^{d^{r-1}_{\vphi'} \ten 1} \ar[dll]^(0.4){(-1)^{r-1} \ten m} \\
T^{\gL,r-1}_{\vphi} &= &
T^{\gL,r-1}_{\vphi'} & \oplus &
T^{\gL,r-2}_{\vphi'} \ten_{\gL}
\left(\gL e_{\vphi(s+1)}\ten e_{\vphi(s+1)} \gL \right)
}
\]
which shows that they coincide with the $d^r_{\vphi}$ as defined
in~\eqref{e:dTr}.
\end{proof}

As a side application, we show the following commutativity result which
is analogous to the fact that in a Weyl group corresponding to a
generalized Cartan matrix $B$, the two simple reflections $r^B_i$ and
$r^B_j$ commute when $B_{ij} = 0 = B_{ji}$, compare with
Proposition~2.12 of~\cite{SeidelThomas01}.

\begin{lemma}
If $\bform{P_i}{P_j}_{\gL} = 0 = \bform{P_j}{P_i}_{\gL}$ then
$R^{\gL}_i R^{\gL}_j \simeq R^{\gL}_j R^{\gL}_i$.
\end{lemma}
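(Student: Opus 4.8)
The plan is to use the explicit description of compositions of the functors $R_i^{\gL}$ from Lemma~\ref{l:T}. Since the hypothesis $\bform{P_i}{P_j}_{\gL} = 0 = \bform{P_j}{P_i}_{\gL}$ should encode a vanishing condition on the spaces $e_i \gL e_j$ and $e_j \gL e_i$, I would first translate the bilinear-form vanishing into algebra. For $X, Y$ projective the Euler form reduces to $\bform{P_i}{P_j}_{\gL} = \dim_k \Hom_{\gL}(P_i, P_j) = \dim_k e_j \gL e_i$, so the hypothesis says precisely that $e_j \gL e_i = 0$ and $e_i \gL e_j = 0$. This is the categorical analogue of $B_{ij} = 0 = B_{ji}$ for the Cartan matrix, matching the stated analogy with Weyl group reflections.

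\medskip

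Next I would apply Lemma~\ref{l:T} to both composites. Take $s = 2$ and compare the function $\vphi$ with $\vphi(1) = j$, $\vphi(2) = i$ (computing $R^{\gL}_i R^{\gL}_j$) against the function $\psi$ with $\psi(1) = i$, $\psi(2) = j$ (computing $R^{\gL}_j R^{\gL}_i$). By the lemma, $R^{\gL}_i R^{\gL}_j = - \tenl_{\gL} T^{\gL}_{\vphi}$ and $R^{\gL}_j R^{\gL}_i = - \tenl_{\gL} T^{\gL}_{\psi}$, so it suffices to produce an isomorphism of complexes of bimodules $T^{\gL}_{\vphi} \simeq T^{\gL}_{\psi}$. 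Writing out the terms via~\eqref{e:Tr}: in degree $0$ both are $\gL$; in degree $-1$ both equal $\gL e_i \ten_k e_i \gL \oplus \gL e_j \ten_k e_j \gL$ (the order of summands being immaterial); and the degree $-2$ term of $T^{\gL}_{\vphi}$ is the single summand $\gL e_j \ten e_j \gL e_i \ten e_i \gL$, while that of $T^{\gL}_{\psi}$ is $\gL e_i \ten e_i \gL e_j \ten e_j \gL$. Here is where the hypothesis enters: the middle factors $e_j \gL e_i$ and $e_i \gL e_j$ both vanish, so \emph{both} degree $-2$ terms are zero. Hence each complex reduces to the same two-term complex $\gL e_i \ten e_i \gL \oplus \gL e_j \ten e_j \gL \xrightarrow{m \oplus m} \gL$, and the differentials~\eqref{e:dTr} agree on the surviving summands.

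\medskip

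The identification of the two truncated complexes gives the desired isomorphism $T^{\gL}_{\vphi} \simeq T^{\gL}_{\psi}$ of complexes of $\gL$-$\gL$-bimodules, and tensoring yields the natural isomorphism $R^{\gL}_i R^{\gL}_j \simeq R^{\gL}_j R^{\gL}_i$ of functors on $\dgL$ (and its restriction to $\per \gL$). I expect the only genuine subtlety to be the bookkeeping of signs and the ordering of direct summands when invoking~\eqref{e:dTr}, together with making sure the vanishing of $e_i \gL e_j$ and $e_j \gL e_i$ is correctly extracted from the Euler-form hypothesis; once the degree $-2$ terms are seen to vanish the argument is essentially formal. A cleaner packaging would be to observe directly from Lemma~\ref{l:T} that whenever the relevant mixed idempotent spaces vanish, the complex $T^{\gL}_{\vphi}$ depends only on the \emph{set} $\{i, j\}$ and not on the order of the entries of $\vphi$.
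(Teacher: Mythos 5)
Your proposal is correct and takes essentially the same route as the paper's proof: both apply Lemma~\ref{l:T} with $s=2$, observe that the hypothesis forces $e_i \gL e_j = 0 = e_j \gL e_i$ so that the degree $-2$ terms of the two bimodule complexes $T^{\gL}_{\vphi}$ and $T^{\gL}_{\psi}$ vanish, and identify the surviving two-term complexes $\left(\gL e_i \ten e_i \gL\right) \oplus \left(\gL e_j \ten e_j \gL\right) \to \gL$. Your added care in extracting the vanishing of $e_j \gL e_i$ and $e_i \gL e_j$ from the Euler form (valid here since for projective modules the higher Ext terms vanish and the form reduces to $\dim_k \Hom_{\gL}(P_i,P_j)$, a non-negative quantity) simply makes explicit what the paper leaves implicit.
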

\begin{proof}
Indeed, $R^{\gL}_i R^{\gL}_j$ and $R^{\gL}_j R^{\gL}_i$ are given by
the complexes
\begin{align*}
& \gL e_j \ten e_j \gL e_i \ten e_i \gL \to
\left(\gL e_j \ten e_j \gL \right) \oplus
\left(\gL e_i \ten e_i \gL \right) \to \gL ,\\
& \gL e_i \ten e_i \gL e_j \ten e_j \gL \to
\left(\gL e_i \ten e_i \gL \right) \oplus
\left(\gL e_j \ten e_j \gL \right) \to \gL
\end{align*}
which are isomorphic since $e_j \gL e_i = 0 = e_i \gL e_j$.
\end{proof}

A special role is played by the composition $R_n^{\gL} \cdot \ldots
\cdot R_2^{\gL} \cdot R_1^{\gL}$ corresponding to the identity function
on $\{1,\dots,n\}$. We thus denote by $T^{\gL} = T^{\gL}_{id}$ the
corresponding complex of bimodules of Lemma~\ref{l:T}, so that
\begin{equation} \label{e:Tid}
R_n^{\gL} \cdot \ldots \cdot R_2^{\gL} \cdot R_1^{\gL} = - \tenl_{\gL}
T^{\gL} .
\end{equation}

\subsection{Triangular algebras}
\label{ssec:triangular}

In this section we study the complexes $T^A$ for triangular algebras
$A$.

\begin{defn}
A finite dimensional algebra $A$ over a field $k$, with
primitive orthogonal idempotents $e_1, \dots, e_n$, is called
\emph{triangular} if $e_i A e_j = 0$ for all $j < i$ and $e_i A e_i
\simeq k$ for all $1 \leq i \leq n$.
\end{defn}

Triangular algebras have finite global dimension, hence the categories
$\per A$ and $\dA$ coincide.

\begin{lemma} \label{l:RiPj}
Let $A$ be triangular and let $1 \leq i \leq j \leq n$. Then
\[
R_i^A(P_j) \simeq \begin{cases} 0 & \text{if $j=i$,}
\\ P_j & \text{if $j > i$,}
\end{cases}
\]
in the homotopy category $\cK^b(A)$.
\end{lemma}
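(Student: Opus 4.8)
The plan is to use the explicit description of $R_i^A$ from Lemma~\ref{l:RiX} and then simplify drastically using triangularity. Recall that $R_i^A(X)$ is the two-term complex $\Hom_A(P_i, X) \ten_k P_i \xrightarrow{ev} X$ concentrated in degrees $-1$ and $0$. Applying this to $X = P_j = e_j A$, I first compute $\Hom_A(P_i, P_j) = \Hom_A(e_iA, e_jA) \cong e_j A e_i$, which is nonzero only when $j \geq i$ by the triangularity condition $e_j A e_i = 0$ for $i > j$. So the relevant range $i \leq j$ is exactly where this Hom-space can be nonzero.

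The key split is according to whether $j = i$ or $j > i$. When $j = i$, triangularity gives $e_i A e_i \simeq k$, so $\Hom_A(P_i, P_i) \cong k$, and the evaluation map $ev : \Hom_A(P_i,P_i) \ten_k P_i \to P_i$ sends the class of the identity tensored with $y$ to $y$; that is, $ev$ is an isomorphism $k \ten_k P_i \xrightarrow{\sim} P_i$. Hence the complex $R_i^A(P_i) = (P_i \xrightarrow{\sim} P_i)$ is the cone of an isomorphism, which is contractible and therefore isomorphic to $0$ in $\cK^b(A)$. When $j > i$, I would like the complex to collapse to $P_j$ sitting in degree $0$. The cleanest route is to show that $ev$ is \emph{split injective} as a map of $A$-modules, so that the two-term complex $\Hom_A(P_i,P_j)\ten_k P_i \xrightarrow{ev} P_j$ is homotopy equivalent to its cokernel placed in degree $0$; I then need to identify that cokernel with $P_j$, or more directly argue that the image of $ev$ together with a complement realizes $P_j$ up to homotopy so that the surviving term is $P_j$.

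The main obstacle is the case $j > i$: unlike the $j = i$ case, $ev$ need not be an isomorphism (its image is the submodule of $P_j$ generated by the $e_i$-components), so I cannot simply invoke contractibility. What I expect to need is a careful analysis of $ev$ using the structure of $P_j = e_j A$ as a right $A$-module together with the triangularity of $A$. The natural strategy is to exhibit an explicit contraction: choose a $k$-basis of $\Hom_A(P_i, P_j) \cong e_j A e_i$ and the induced elements of $P_j$, and show that $ev$ is a split monomorphism so that the mapping cone is homotopy equivalent to the complex $0 \to \mathrm{coker}(ev) \to 0$. Identifying $\mathrm{coker}(ev)$ with $P_j$ up to homotopy will be the delicate point; I anticipate that a direct computation, possibly invoking the fact that $R_i^A$ is triangulated together with the known class $[R_i^A(P_j)] = [P_j] - \bform{P_i}{P_j}_A [P_i]$ from Corollary~\ref{c:RXrx}, will pin down the homotopy type. (Note that on Grothendieck groups $\bform{P_i}{P_j}_A = \dim_k e_j A e_i \neq 0$ in general, so the Euler-characteristic heuristic alone does not give $P_j$; hence a genuine homotopy-level argument, establishing that the extra term is cancelled by a contractible summand rather than surviving in cohomology, is unavoidable and is where the real work lies.)
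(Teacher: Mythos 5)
There is a genuine gap: you have the triangularity condition backwards, and this reversal manufactures a ``hard case'' that does not exist. The paper's convention is $e_i A e_j = 0$ for $j < i$ (equivalently, the Cartan matrix $C_A$ is upper triangular with respect to the reversed basis $[P_n],\dots,[P_1]$). Since $\Hom_A(P_i, P_j) = \Hom_A(e_i A, e_j A) \cong e_j A e_i$, this space vanishes precisely when $i < j$ --- not, as you assert, when $i > j$. Hence for $j > i$ the complex $R_i^A(P_j)$ is literally $0 \to P_j$, i.e.\ it \emph{equals} $P_j$ already in $\cC^b(A)$: there is no evaluation map to split and no cokernel to identify. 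This is the paper's one-line argument for that case. Your treatment of the case $j = i$ (namely $e_i A e_i \simeq k$, so $ev$ is an isomorphism $k \ten_k P_i \xrightarrow{\sim} P_i$ and the two-term complex is null-homotopic) is correct and coincides with the paper's proof.

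Moreover, the strategy you sketch for $j > i$ is not merely unnecessary but provably impossible whenever $\Hom_A(P_i, P_j) \neq 0$: by your own appeal to Corollary~\ref{c:RXrx}, one has $[R_i^A(P_j)] = [P_j] - d\,[P_i]$ in $K_0(\per A)$ with $d = \dim_k e_j A e_i$, and since the classes of the indecomposable projectives form a basis of $K_0(\per A)$, this differs from $[P_j]$ whenever $d > 0$; no homotopy equivalence $R_i^A(P_j) \simeq P_j$ could then exist, split monomorphism or not. The parenthetical remark at the end of your proposal, where you notice exactly this tension and conclude that a ``genuine homotopy-level argument is unavoidable,'' should instead have been the signal that the Hom space must vanish in the range $j > i$ --- which triangularity, read in the correct direction, guarantees.
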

\begin{proof}
If $i < j$, then $\Hom_A(P_i, P_j) \simeq e_j A e_i = 0$,
hence by Lemma~\ref{l:RiX}, $R_i^A(P_j) = P_j$ (even in $\cC^b(A)$).

Similarly, $\Hom_A(P_i, P_i) \simeq k$, hence $R_i^A(P_i)$ equals the
null-homotopic complex $P_i \to P_i$, so it vanishes in $\cK^b(A)$.
\end{proof}

\begin{prop} \label{p:TA}
Let $A$ be triangular. Then:
\begin{enumerate}
\renewcommand{\theenumi}{\alph{enumi}}
\item
The functor $R_n^A \cdot \ldots \cdot R_2^A \cdot R_1^A$ on $\dA$ is
isomorphic to the zero functor.
\item
$T^A \simeq 0$ in $\cD^b(A^{op} \ten A)$.
\item
$T^A$ is contractible as a complex of right $A$-modules as well as a
complex of left $A$-modules.
\end{enumerate}
\end{prop}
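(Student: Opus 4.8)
The plan is to prove the three statements in the order (a) $\Rightarrow$ (b) $\Rightarrow$ (c), or rather to establish (a) first by a direct computation and then leverage it to obtain the stronger bimodule-level statements. First I would prove part (a), which asks that the composite functor $R_n^A \cdot \ldots \cdot R_1^A$ is isomorphic to the zero functor on $\dA$. Since each $R_i^A$ is triangulated and $\dA$ is generated (as a triangulated category) by the indecomposable projectives $P_1, \dots, P_n$, it suffices to show the composite sends each $P_j$ to $0$. The key observation is Lemma~\ref{l:RiPj}: applying the functors in the order $R_1^A$ first, then $R_2^A$, and so on, I track the image of a fixed $P_j$. The functors $R_i^A$ with $i < j$ fix $P_j$ (since $\Hom_A(P_i, P_j) \simeq e_j A e_i = 0$ by triangularity), so $P_j$ survives unchanged until we apply $R_j^A$, at which point $R_j^A(P_j) \simeq 0$ in $\cK^b(A)$. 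Hence the whole composite annihilates every $P_j$, giving (a).

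Next I would deduce (b), namely $T^A \simeq 0$ in $\cD^b(A^{op} \ten A)$. By the identity~\eqref{e:Tid} of Lemma~\ref{l:T}, the composite functor in (a) is exactly $- \tenl_A T^A$. The natural strategy is to reinterpret part (a) at the level of bimodules: the functor $- \tenl_A T^A$ being zero on all of $\dA$ should force the bimodule complex $T^A$ itself to be acyclic. Concretely, I would evaluate $- \tenl_A T^A$ on the regular bimodule, or better, observe that $A \tenl_A T^A \simeq T^A$ (viewing $A$ as the free left module), and since the functor is zero this recovers $T^A \simeq 0$ as a complex of right $A$-modules; promoting this to an isomorphism in $\cD^b(A^{op} \ten A)$ requires checking that the terms $T^{A,r}$ and differentials are genuinely bimodule maps, which they are by construction in Lemma~\ref{l:T}. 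Here triangularity is essential: for a general algebra the composite functor need not vanish, and indeed the whole point is that the telescoping via Lemma~\ref{l:RiPj} only works because $e_i A e_j = 0$ for $j < i$ truncates the relevant Hom-spaces.

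Finally, for (c) I would strengthen (b) from an isomorphism in the derived category to actual contractibility in the homotopy category of one-sided (right, resp.\ left) modules. The complex $T^A$ has terms $T^{A,r} = \bigoplus_{i_1 < \dots < i_r} A e_{i_1} \ten e_{i_1} A e_{i_2} \ten \dots \ten e_{i_r} A$, each of which is projective as a right $A$-module (and as a left $A$-module, by the symmetric structure of the tensor factors). An acyclic bounded complex of projectives is automatically contractible, so once I know $T^A$ is acyclic — which is the content of (b) — contractibility as a complex of right (or left) $A$-modules follows from the standard fact that bounded acyclic complexes of projectives split. I would verify that the outer tensor factors $A e_{i_1}$ and $e_{i_r} A$ are projective on the appropriate side, making each $T^{A,r}$ projective both as a left and as a right $A$-module.

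The main obstacle I anticipate is the step from (a) to (b): showing that the vanishing of the \emph{functor} $- \tenl_A T^A$ on $\dA$ actually forces the \emph{complex} $T^A$ to be acyclic as a bimodule complex, rather than merely acyclic after tensoring with each projective. The cleanest route is to apply the functor to the regular module $A$ itself and use that $A$ is a compact generator whose endomorphisms recover the bimodule structure; one must ensure the identification $A \tenl_A T^A \simeq T^A$ respects the left $A$-action, so that the conclusion genuinely lives in $\cD^b(A^{op} \ten A)$ and not just in $\cD^b(A)$. Carefully matching the bimodule differentials~\eqref{e:dTr} with the functorial composition is the delicate bookkeeping, but Lemma~\ref{l:T} has already done the bulk of that work.
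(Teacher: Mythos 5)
Your proposal is correct and follows essentially the same route as the paper: part~(a) by telescoping Lemma~\ref{l:RiPj} over the projectives generating $\dA$, part~(b) by evaluating $- \tenl_A T^A$ at the regular module via~\eqref{e:Tid} (acyclicity of $T^A$ is a property of the underlying complex, so the isomorphism to zero holds in $\cD^b(A^{op} \ten A)$ since the terms and differentials are bimodule maps --- your ``compact generator'' machinery is unnecessary), and part~(c) from projectivity of the terms on both sides plus the splitting of bounded acyclic complexes of projectives. The only cosmetic difference is that the paper notes the null-homotopy from~(a) already gives right-module contractibility directly, where you route both sides of~(c) through the acyclic-complex-of-projectives argument.
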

\begin{proof}
A repeated application of Lemma~\ref{l:RiPj} shows that for $1 \leq j,
s \leq n$,
\[
(R^A_s \cdot \ldots \cdot R^A_1) (P_j) \simeq \begin{cases} 0 & \text{if $j
\leq s$,} \\
P_j & \text{if $j > s$,}
\end{cases}
\]
in $\cK^b(A)$, hence the complex $(R^A_n \cdot \ldots \cdot R^A_1)(A)$ is
homotopic to zero. Since $A$ generates $\dA$, the first assertion
follows. Now the second assertion follows from~\eqref{e:Tid}. For the
third, observe that all the terms of $T^A$ are projective both as right
and as left $A$-modules (in fact, the above argument shows directly the
contractibility of $T^A$ as a complex of right $A$-modules).
\end{proof}

\begin{rem}
Since all its terms at negative degrees are also projective as
$A$-$A$-bimodules, the complex $T^A$ yields a projective resolution of
$A$ as an $A$-$A$-bimodule, which can be useful when computing
Hochschild cohomology. Indeed, a similar resolution is given by
Cibils~\cite{Cibils89}, where an explicit contraction (of $k$-modules)
is also given.
\end{rem}

\begin{rem}
Since $T^A$ is contractible as a complex of left $A$-modules, the
tensor product $X \ten_A T^A$ yields a projective resolution of a right
module $X_A$. Similarly, $T^A \ten_A Y$ gives a projective resolution
of a left module $_AY$.
\end{rem}

The statement of Proposition~\ref{p:TA} is no longer true when the
assumption that $A$ is triangular is removed, even under the condition
that $A$ has finite global dimension. This is demonstrated by the
following example.

\begin{example}
Let $\gL$ be the path algebra of the quiver
\[
\xymatrix{{\bullet_1} \ar@/^/[r]^{\alpha} & {\bullet_2}
\ar@/^/[l]^{\beta}}
\]
modulo the ideal generated by the path $\beta \alpha$. The algebra
$\gL$ is $5$-dimensional, its primitive orthogonal idempotents $e_1,
e_2$ correspond to the vertices of the quiver and its global dimension
is $2$. However, $\gL$ is not triangular as its Cartan matrix equals
\[
\begin{pmatrix}
2 & 1 \\ 1 & 1
\end{pmatrix} .
\]
Moreover, the complex
\[
T^{\gL} = \Bigl( \gL e_1 \ten e_1 \gL e_2 \ten e_2 \gL \to \left(\gL
e_1 \ten e_1 \gL \right) \oplus \left(\gL e_2 \ten e_2 \gL \right) \to
\gL \Bigr)
\]
is not acyclic since its Euler characteristic as a complex of vector
spaces (that is, the alternating sum of dimensions) is $3 \cdot 1 \cdot
2 - (3 \cdot 3 + 2 \cdot 2) + 5 \neq 0$.

Note that when $k=\bC$, the category $\modf \gL$ is equivalent to the
principal block of category $\cO$ of the complex Lie algebra
$\mathfrak{sl}_2$, see~\cite[\S5.1.1]{Stroppel03}.
\end{example}

For a triangular algebra $A$, the compositions of $R^A_i$ in the
\emph{reverse} order have a very simple form. This is recorded in the
next proposition, which will not be used in the sequel.

\begin{prop}
Let $A$ be triangular. Let $I \subseteq \{1,\dots,n\}$ and enumerate its
elements in decreasing order $I = \{i_1 > i_2 > \dots > i_s\}$. Then
\[
R^A_{i_s} \cdot \ldots \cdot R^A_{i_1} = - \tenl_A
\Bigl( \bigoplus_{i \in I} A e_i \ten e_i A \xrightarrow{m} A \Bigr)
\]
\end{prop}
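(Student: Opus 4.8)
The plan is to deduce this directly from Lemma~\ref{l:T}, choosing the function $\vphi$ so that the decreasing enumeration of $I$ causes all but the two lowest terms of the associated bimodule complex to vanish by triangularity.

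First I would set $\vphi : \{1,\dots,s\} \to \{1,\dots,n\}$ to be $\vphi(j) = i_j$, so that $\vphi$ lists the elements of $I$ in the given decreasing order $i_1 > i_2 > \dots > i_s$. With this choice, Lemma~\ref{l:T} applied to $A$ gives
\[
R^A_{i_s} \cdot \ldots \cdot R^A_{i_1} = R^A_{\vphi(s)} \cdot \ldots \cdot R^A_{\vphi(1)} = - \tenl_A T^A_{\vphi} ,
\]
so everything reduces to identifying the complex $T^A_{\vphi}$.

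Next I would inspect the higher terms. By~\eqref{e:Tr}, for $r \geq 2$ each summand of $T^{A,r}_{\vphi}$ is indexed by a strictly increasing tuple $1 \leq j_1 < \dots < j_r \leq s$ and contains a tensor factor $e_{\vphi(j_1)} A e_{\vphi(j_2)}$. Since $\vphi$ is strictly decreasing, $j_1 < j_2$ forces $\vphi(j_2) < \vphi(j_1)$, and triangularity of $A$ then gives $e_{\vphi(j_1)} A e_{\vphi(j_2)} = 0$. Hence every summand vanishes and $T^{A,r}_{\vphi} = 0$ for all $r \geq 2$. The only surviving terms are $T^{A,0}_{\vphi} = A$ in degree $0$ and, in degree $-1$,
\[
T^{A,1}_{\vphi} = \bigoplus_{j=1}^{s} A e_{\vphi(j)} \ten e_{\vphi(j)} A = \bigoplus_{i \in I} A e_i \ten e_i A ,
\]
on which, by~\eqref{e:dTr}, the differential $d^1_{\vphi}$ acts as the multiplication map $m$. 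This is exactly the two-term complex in the statement.

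There is no genuine obstacle here; the whole content is the bookkeeping of the enumeration. The point to get right is the \emph{orientation}: the summation indices $j_1 < \dots < j_r$ in Lemma~\ref{l:T} are increasing, while $\vphi$ must be decreasing, so that their composite reverses order and each interior factor $e_{\vphi(j_k)} A e_{\vphi(j_{k+1})}$ has its first index larger than its second, which is precisely the configuration annihilated by triangularity. Choosing instead the increasing enumeration would keep all terms and recover the full complex $T^A$ rather than the claimed truncation.
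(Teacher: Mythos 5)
Your proposal is correct and follows the paper's own proof: both apply Lemma~\ref{l:T} with $\vphi(t) = i_t$ and kill all terms $T^{A,r}_{\vphi}$ for $r \geq 2$ using triangularity of $A$ together with the fact that the decreasing enumeration reverses the increasing summation indices. If anything, you are slightly more careful than the paper, which only cites the consecutive factors $e_{i_t} A e_{i_{t+1}} = 0$, whereas your argument correctly handles the factor $e_{\vphi(j_1)} A e_{\vphi(j_2)}$ for arbitrary, not necessarily consecutive, $j_1 < j_2$.
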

\begin{proof}
Apply Lemma~\ref{l:T} for the function $\vphi$ defined by $\vphi(t) = i_t$
for $1 \leq t \leq s$ and observe that all the terms
$T^{A,r}_{\vphi}$ vanish when $r > 1$ as $e_{i_t} A e_{i_{t+1}} = 0$ for all
$1 \leq t < s$.
\end{proof}

\subsection{Triangular bimodules and their trivial extensions}
Let $A$ be a basic finite-dimensional algebra with primitive orthogonal
idempotents $e_1, \dots, e_n$.

\begin{defn}
An $A$-$A$-bimodule $M$ is \emph{triangular} if $e_i M e_j = 0$ for all
$j < i$.
\end{defn}

Let $M$ be a triangular bimodule and let $DM = \Hom_k(M,k)$ be its
dual. Consider the \emph{trivial extension} $\gL = A \ltimes DM$, that is,
the $k$-algebra which has $A \oplus DM$ as an underlying vector
space, with the multiplication defined by $(a,\mu) (a',\mu') = (aa', a
\mu' + \mu a')$.

The ring homomorphisms $A \xrightarrow{\iota} \gL \xrightarrow{\pi} A$
given by
\begin{align*}
\iota(a) = (a,0) && \pi(a,\mu) = a
\end{align*}
give rise to the bimodules $_A{\gL}_A$ and $_{\gL}A_{\gL}$ (where $a
\in A$ acts via multiplication by $\iota(a)$ and $\lambda \in \gL$ acts
via multiplication by $\pi(\lambda)$). In particular we have the exact
functors
\begin{align*}
\iota^{*} &= - \ten_\gL \gL_A = \Hom_{\gL}({_A{\gL}_{\gL}}, -)
: \modf \gL \to \modf A \\
\pi^{*} &= - \ten_A A_{\gL} = \Hom_A ({_{\gL}A_A}, -)
: \modf A \to \modf \gL
\end{align*}
which induce functors
\begin{align*}
\dgL \xrightarrow{- \ten_{\gL} \gL_A} \dA
&, &
\dA \xrightarrow{- \ten_A A_{\gL}} \dgL .
\end{align*}

The left derived functors of their adjoints
\begin{align*}
- \ten_A \gL_{\gL} : \modf A \to \modf \gL
&, &
- \ten_{\gL} A_A : \modf \gL \to \modf A .
\end{align*}
give rise to
\begin{align*}
\dA = \per A \xrightarrow{- \tenl_A \gL_{\gL}} \per \gL
&, &
\per \gL \xrightarrow{- \tenl_{\gL} A_A} \per A = \dA .
\end{align*}

The elements $\iota(e_1),\dots,\iota(e_n)$ are primitive orthogonal
idempotents of $\gL$. We shall denote them by $e_1,\dots,e_n$ when
there is no risk of confusion.

\begin{prop} \label{p:SES}
Let $A$ be a finite-dimensional basic algebra and $M$ be a triangular
bimodule. Then there exist short exact sequences of complexes of
bi-modules
\begin{equation} \label{e:SES}
\begin{split}
0 \to {_{\gL}DM_A} \to \gL \ten_A T^A \to T^{\gL} \ten_{\gL} A \to 0 \\
0 \to {_ADM_{\gL}} \to T^A \ten_A \gL \to A \ten_{\gL} T^{\gL} \to 0
\end{split}
\end{equation}
\end{prop}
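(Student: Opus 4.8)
The plan is to establish the first short exact sequence by an explicit, term-by-term comparison of the two complexes $\gL \ten_A T^A$ and $T^{\gL} \ten_{\gL} A$ of $\gL$-$A$-bimodules, and then to obtain the second by the mirror-image computation. Recall from Lemma~\ref{l:T} that the degree $-r$ term of $T^{\gL}$ is the direct sum over $1 \le i_1 < \dots < i_r \le n$ of $\gL e_{i_1} \ten e_{i_1} \gL e_{i_2} \ten \dots \ten e_{i_r} \gL$, and similarly for $T^A$ with $A$ in place of $\gL$.

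The first observation I would make is that the triangularity of $M$ collapses all the \emph{inner} tensor factors of $T^{\gL}$ to factors over $A$. Writing $\gL = A \oplus DM$ as $A$-$A$-bimodules, one has $e_i\, DM\, e_j \cong D(e_j M e_i)$, which vanishes whenever $i < j$ since $M$ is triangular. Hence for an inner factor $e_{i_p} \gL e_{i_{p+1}}$ with $i_p < i_{p+1}$ we get $e_{i_p} \gL e_{i_{p+1}} = e_{i_p} A e_{i_{p+1}}$. Consequently, after tensoring, the degree $-r$ terms of $\gL \ten_A T^A$ and of $T^{\gL} \ten_{\gL} A$ are, for every $r \ge 1$, both naturally isomorphic to
\[
\bigoplus_{i_1 < \dots < i_r} \gL e_{i_1} \ten e_{i_1} A e_{i_2} \ten \dots \ten e_{i_{r-1}} A e_{i_r} \ten e_{i_r} A ,
\]
the first replacing the leftmost $A e_{i_1}$ of $T^A$ by $\gL \ten_A A e_{i_1} = \gL e_{i_1}$, and the second replacing the rightmost $e_{i_r} \gL$ of $T^{\gL}$ by $e_{i_r} \gL \ten_{\gL} A = e_{i_r} A$. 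In degree $0$, however, they differ: $\gL \ten_A T^{A,0} = \gL$ while $T^{\gL,0} \ten_{\gL} A = A$.

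I would then define the comparison morphism to be this natural identification in every negative degree and the projection $\pi : \gL \to A$ in degree $0$. The core of the argument is to check that this is a morphism of complexes, i.e.\ that it commutes with the differentials of~\eqref{e:dTr}. In every internal slot the differential is the same multiplication-and-contraction map in both complexes, so the only squares requiring attention are those touching the two ends: in $\gL \ten_A T^A$ the leftmost contraction is absorbed into $\gL e_{i_1}$, whereas in $T^{\gL} \ten_{\gL} A$ the rightmost contraction is followed by the reduction $e_{i_r} \gL \to e_{i_r} A$; I expect this reconciliation of the two ways the extension enters to be the main (though purely formal) obstacle. A direct check in ``slot notation'' $x_0 \ten \dots \ten x_r$ (with $x_0 \in \gL e_{i_1}$, $x_r \in e_{i_r} A$) shows that both differentials equal $\sum_{j=0}^{r-1} (-1)^j x_0 \ten \dots \ten x_j x_{j+1} \ten \dots \ten x_r$, while the bottom square commutes because $\pi(x_0 \iota(x_1)) = \pi(x_0) x_1$. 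The resulting morphism is degreewise surjective, an isomorphism in negative degrees, and equal to $\pi$ in degree $0$; its kernel is therefore concentrated in degree $0$ and equal to $\ker \pi = DM$, carrying precisely the bimodule structure of $_{\gL}DM_A$ (left $\gL$ acting through $\pi$, right $A$ by multiplication). This yields the first short exact sequence of~\eqref{e:SES}.

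Finally, I would obtain the second short exact sequence by the identical computation with left and right interchanged: the rightmost factor $e_{i_r} A$ of $T^A$ is absorbed into $e_{i_r} A \ten_A \gL = e_{i_r} \gL$, the leftmost factor $\gL e_{i_1}$ of $T^{\gL}$ is reduced via $A \ten_{\gL} \gL e_{i_1} = A e_{i_1}$, the triangularity of $M$ again collapses the inner factors, and the degree-$0$ discrepancy $\gL$ versus $A$ produces the kernel $_A DM_{\gL}$. Since the only input beyond formal bookkeeping is the vanishing $e_i\, DM\, e_j = 0$ for $i<j$, which is symmetric in the two sides, I would present this direct symmetric computation rather than deducing it abstractly.
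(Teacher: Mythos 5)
Your proposal is correct and takes essentially the same route as the paper's proof: a degreewise identification of $\gL \ten_A T^A$ with $T^{\gL} \ten_{\gL} A$ in negative degrees, obtained from the vanishing $e_i\,DM\,e_j = 0$ for $i < j$ (so that $e_i A e_j$ may be identified with $e_i \gL e_j$ in the inner slots), combined with the projection $\pi : \gL \to A$ in degree $0$, whose kernel yields ${_{\gL}DM_A}$. Your verification that the identifications commute with the differentials — via $\iota$ being a ring homomorphism in the inner slots and $\pi(x_0\,\iota(x_1)) = \pi(x_0)\,x_1$ in the bottom square — and your symmetric treatment of the second sequence match the paper's argument step for step.
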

\begin{proof}
We prove only the exactness of the first sequence, as the proof for the other
is similar.

Let $1 \leq r \leq n$ and consider the terms at degree $-r$ of $\gL
\ten_A T^A$ and $T^{\gL} \ten_{\gL} A$ as direct sums
\begin{align*}
\bigl( \gL \ten_A T^A \bigr)^{-r} &= \bigoplus \gL e_{i_1} \ten e_{i_1}
A e_{i_2} \ten \dots \ten e_{i_{r-1}} A
e_{i_r} \ten e_{i_r} A \\
\bigl( T^{\gL} \ten_{\gL} A \bigr)^{-r} &= \bigoplus \gL e_{i_1} \ten
e_{i_1} \gL e_{i_2} \ten \dots \ten e_{i_{r-1}} \gL e_{i_r} \ten
e_{i_r} A
\end{align*}
running over the tuples $1 \leq i_1 < \dots < i_r \leq n$.

By our hypothesis that $M$ is a triangular bimodule, $e_j M e_i = 0$
hence $e_i DM e_j = 0$ for all $i < j$. Therefore we can identify
$e_i A e_j$ with $e_i \gL e_j$ (via either $\iota$ or $\pi$)
so that the terms $(\gL \ten_A T^A)^{-r}$
and $(T^{\gL} \ten_{\gL} A)^{-r}$ are isomorphic via the map
\[
\lambda_0 \ten a_1 \ten \dots \ten a_{r-1} \ten a_r \mapsto
\lambda_0 \ten \iota(a_1) \ten \dots \ten \iota(a_{r-1}) \ten a_r .
\]

Moreover, by considering the explicit forms of the right $A$-action on
$\gL$ and the left $\gL$-action on $A$,
\begin{align*}
\lambda_0 \cdot a_1 = \lambda_0 \iota(a_1) &,& a_{r-1}a_r =
\iota(a_{r-1}) \cdot a_r &,& \iota(a_j a_{j+1}) = \iota(a_j)
\iota(a_{j+1})
\end{align*}
for $1 \leq j < r-1$, we see that these isomorphisms commute with the
differentials as long as $r > 1$.

Finally, note that $(\gL \ten_A T^A)^0 = \gL$, $(T^{\gL} \ten_{\gL}
A)^0 = A$ and there is a commutative diagram
\[
\xymatrix{ {\bigoplus \gL e_i \ten e_i A} \ar[r]^(0.7){d^{A,1}}
\ar[d]^{\simeq} &
\gL \ar[d]^{\pi} \\
{\bigoplus \gL e_i \ten e_i A} \ar[r]^(0.7){d^{\gL,1}} & A }
\]
with the top and bottom differentials given by
\begin{align*}
d^{A,1} : \lambda_i \ten a_i \mapsto \lambda_i \iota(a_i) \in \gL &,&
d^{\gL,1} : \lambda_i \ten a_i \mapsto \pi(\lambda_i) a_i \in A
\end{align*}
respectively.

Summarizing, we get the following commutative diagram of complexes of
$A$-$\gL$-bimodules which shows the desired exact sequence.

{\small
\[
\xymatrix@=1pc{
& & & & 0 \ar[d] \\
& & & & DM \ar[d] \\
{\dots} \ar[r] &
\bigoplus \gL e_{i_1} \ten e_{i_1} A e_{i_2} \ten \dots \ten
e_{i_{r-1}} A e_{i_r} \ten e_{i_r} A \ar[r] \ar[d]^{\simeq} &
{\dots} \ar[r] &
\bigoplus \gL e_i \ten e_i A \ar[r] \ar[d]^{\simeq} &
\gL \ar[d]^{\pi} \\
{\dots} \ar[r] &
\bigoplus \gL e_{i_1} \ten e_{i_1} \gL e_{i_2} \ten \dots \ten
e_{i_{r-1}} \gL e_{i_r} \ten e_{i_r} A \ar[r] &
{\dots} \ar[r] &
\bigoplus \gL e_i \ten e_i A \ar[r] &
A \ar[d] \\
& & & & 0
}
\]
}
\end{proof}

\subsection{Proof of Theorem~\ref{t:AM}}

Let $A$ be a triangular algebra with primitive orthogonal idempotents
$e_1, \dots, e_n$ and let $M$ be a triangular $A$-$A$-bimodule (with
respect to this ordering of the idempotents). In this case, we can
combine Propositions~\ref{p:TA} and~\ref{p:SES} and deduce the
following result.

\begin{cor} \label{c:QIS}
Let $\gL = A \ltimes DM$. We have
\begin{align*}
T^{\gL} \ten_{\gL} A \xrightarrow{\sim} DM[1] &&
A \ten_{\gL} T^{\gL} \xrightarrow{\sim} DM[1]
\end{align*}
in $\cD^b(\gL^{op} \otimes A)$ and $\cD^b(A^{op} \otimes \gL)$,
respectively.
\end{cor}
\begin{proof}
Since $T^A$ is contractible as a complex of left $A$-modules, the complex
$\gL \ten_A T^A$ is contractible as a complex of left $\gL$-modules, hence
it is isomorphic to zero in $\cD^b(\gL^{op} \ten A)$. Now the assertion
follows from the first short exact sequence in~\eqref{e:SES}.
The proof of the second assertion is similar.
\end{proof}

Part~(b) of Theorem~\ref{t:AM} now follows from Corollary~\ref{c:QIS}
by setting $R_i = R^{\gL}_i$ for $1 \leq i \leq n$ and
using~\eqref{e:Tid}.

\begin{rem}
When $M$ is zero, $\gL = A$ and we recover Proposition~\ref{p:TA}.
\end{rem}

\subsection{$K$-theoretic interpretation}

We now prove part~(a) of Theorem~\ref{t:AM} and explain how that
theorem can be regarded as a categorification of
equation~\eqref{e:prod}. In fact, we will recover this equation through
a process known as \emph{decategorification}, by looking at the effect
of the functors appearing in the theorem on the corresponding
Grothendieck groups.

Indeed, as the functors $R_i^{\gL}$, $- \tenl_A DM_A[1]$ and $-
\tenl_{\gL} A$ are triangulated, they induce linear maps on the
corresponding Grothendieck groups, which we describe explicitly in terms
of the so-called Cartan matrices of $A$ and $\gL$.

For an arbitrary (basic) finite dimensional algebra $\gL$ with
indecomposable projectives $P_1, \dots, P_n$, it will be convenient to
reorder them in reverse order and to consider the basis
\[
\eps_1 = [P_n], \eps_2 = [P_{n-1}], \dots, \eps_n = [P_1]
\]
of the Grothendieck group $K_0(\per \gL)$. We denote by $C_{\gL}$ the
matrix of the Euler form $\Bform_{\gL}$ with respect to that basis,
known as the \emph{Cartan matrix} of $\gL$. In explicit terms,
\begin{align*}
(C_{\gL})_{ij} &= \bform{P_{n+1-i}}{P_{n+1-j}}_{\gL} = \dim_k
\Hom_{\gL}(P_{n+1-i}, P_{n+1-j}) \\
&= \dim_k e_{n+1-j} \gL e_{n+1-i} .
\end{align*}

\begin{lemma}
Let $1 \leq i \leq n$. Then the matrix of the linear map on $K_0(\per
\gL)$ induced by $R_i^{\gL}$ is given by $r^{C_{\gL}}_{n+1-i}$.
\end{lemma}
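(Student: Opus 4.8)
The plan is to read the matrix of $R_i^{\gL}$ straight off Corollary~\ref{c:RXrx} and then match it term-by-term against the definition~\eqref{e:rBi} of a reflection matrix. Since $R_i^{\gL}$ is triangulated it induces a well-defined endomorphism of $K_0(\per \gL)$, so it suffices to compute its effect on each basis vector $\eps_t = [P_{n+1-t}]$ and re-express the answer in the basis $\eps_1,\dots,\eps_n$. Throughout I use the convention that the matrix $M$ of a linear map is determined by $R_i^{\gL}(\eps_t) = \sum_s M_{st}\,\eps_s$.

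First I would apply Corollary~\ref{c:RXrx} with $X = P_{n+1-t}$, obtaining
\[
[R_i^{\gL}(P_{n+1-t})] = [P_{n+1-t}] - \bform{P_i}{P_{n+1-t}}_{\gL}\,[P_i].
\]
The remaining work is purely a translation into the reverse-order basis and into the Cartan matrix. On one hand $[P_i] = \eps_{n+1-i}$, since $\eps_s = [P_{n+1-s}]$. On the other hand, by the very definition of $C_{\gL}$ as the matrix of $\Bform_{\gL}$ in this basis, the coefficient $\bform{P_i}{P_{n+1-t}}_{\gL}$ equals the entry $(C_{\gL})_{n+1-i,\,t}$ (take $a = n+1-i$ and $b = t$ in $(C_{\gL})_{ab} = \bform{P_{n+1-a}}{P_{n+1-b}}_{\gL}$). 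Substituting yields
\[
[R_i^{\gL}(\eps_t)] = \eps_t - (C_{\gL})_{n+1-i,\,t}\,\eps_{n+1-i}.
\]

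Finally, reading off the coefficients gives $M_{st} = \delta_{st} - \delta_{s,\,n+1-i}\,(C_{\gL})_{n+1-i,\,t}$, which is exactly $(r^{C_{\gL}}_{n+1-i})_{st}$ by~\eqref{e:rBi}, proving the claim. I expect no genuine obstacle beyond bookkeeping; the single point requiring care is the reverse indexing $s \mapsto n+1-s$ relating the projective $P_i$ to the chosen basis, which is precisely the mechanism that converts the index $i$ of the functor $R_i^{\gL}$ into the index $n+1-i$ of the reflection $r^{C_{\gL}}_{n+1-i}$.
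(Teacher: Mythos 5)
Your proposal is correct and coincides with the paper's own proof: both apply Corollary~\ref{c:RXrx} to the basis elements $P_{n+1-t}$, rewrite $\bform{P_i}{P_{n+1-t}}_{\gL}$ as $(C_{\gL})_{n+1-i,\,t}$ and $[P_i]$ as $\eps_{n+1-i}$, and read off the columns of the matrix. Your version merely spells out the final comparison with~\eqref{e:rBi} slightly more explicitly than the paper does.
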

\begin{proof}
The $j$-th column of that matrix is equal to the class of
$R^{\gL}_i(P_{n+1-j})$ in $K_0(\per \gL)$ which, according to
Corollary~\ref{c:RXrx}, equals
\[
[R^{\gL}_i(P_{n+1-j})] = [P_{n+1-j}] -
\bform{P_i}{P_{n+1-j}}_{\gL}[P_i] = \eps_j - (C_{\gL})_{n+1-i,j}
\eps_{n+1-i} .
\]
\end{proof}

For an algebra $A$ with primitive orthogonal idempotents $e_1, \dots,
e_n$, the condition that $A$ is triangular implies that the matrix
$C_A$ is upper triangular with ones on its main diagonal.
Similarly to the definition of $C_A$, one can define for any
$A$-$A$-bimodule $X$, a Cartan matrix $C_X$ by
\[
(C_X)_{ij} = \dim_k e_{n+1-j} X e_{n+1-i} ,
\]
so that $X$ is triangular if and only if $C_X$ is upper triangular.

\begin{lemma}
Let $A$ be a triangular algebra and $X$ an $A$-$A$-bimodule. Then the
matrix of the linear map on $K_0(\per A)$ induced by the functor $-
\tenl_A X$ is given by $C_A^{-1} C_X$.
\end{lemma}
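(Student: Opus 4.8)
The plan is to exploit that $- \tenl_A X$ is triangulated, so that its matrix is completely determined by its action on the basis $\eps_j = [P_{n+1-j}]$ of $K_0(\per A)$. The first step is to observe that since $P_{n+1-j} = e_{n+1-j} A$ is projective, the derived tensor product collapses to the ordinary one, so that
\[
P_{n+1-j} \tenl_A X = e_{n+1-j} A \ten_A X \simeq e_{n+1-j} X
\]
as right $A$-modules. Hence the $j$-th column of the matrix we seek is the class $[e_{n+1-j} X]$, and the whole problem reduces to expressing this class in the basis of projectives.

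The second step is to pass through the basis of simple modules. Since $A$ is triangular we have $\Hom_A(P_l, S_l) \simeq e_l A e_l \simeq k$, so for any right module $V$ the composition multiplicity of $S_l$ is $\dim_k V e_l$, giving $[V] = \sum_l (\dim_k V e_l)[S_l]$ in $K_0(\modf A)$. Applying this to $V = e_{n+1-j} X$ and reindexing via $l = n+1-k$, the coordinate of $[e_{n+1-j}X]$ along $[S_{n+1-k}]$ equals $\dim_k e_{n+1-j} X e_{n+1-k} = (C_X)_{kj}$. Thus, in the simple basis $[S_n], \dots, [S_1]$ (ordered to match the $\eps$'s), the class $[e_{n+1-j}X]$ is exactly the $j$-th column of $C_X$.

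The third step is to convert from the simple basis back to the projective basis. The same multiplicity formula applied to $V = P_{n+1-i}$ yields $[P_{n+1-i}] = \sum_k (C_A)_{ki}[S_{n+1-k}]$, which says precisely that $C_A$ is the change-of-basis matrix expressing the projective basis in terms of the simple one; equivalently, it sends projective-basis coordinates to simple-basis coordinates. Triangularity makes $C_A$ upper unitriangular, hence invertible, so the passage in the opposite direction is given by $C_A^{-1}$. Applying $C_A^{-1}$ to the $j$-th column of $C_X$ and letting $j$ range over $1, \dots, n$ shows that the matrix of $- \tenl_A X$ is $C_A^{-1} C_X$.

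The calculation is entirely formal; the only real obstacle is the bookkeeping. One must track the reversed indexing $i \mapsto n+1-i$ consistently, and, above all, get the direction of the change of basis right: because the Cartan matrix carries projective coordinates to simple coordinates, it is its inverse $C_A^{-1}$ — and not $C_A$ — that appears on the left. Everything else rests on the standard identification $\Hom_A(P_l, V) \simeq V e_l$ together with the finiteness of global dimension, which guarantees both that $\per A = \dA$ and that the classes of the projectives form a basis of the Grothendieck group.
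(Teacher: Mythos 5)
Your proof is correct, and it takes a genuinely (if mildly) different route from the paper's, which never mentions simple modules. In the paper, the matrix $x$ determined by $[e_{n+1-j}X] = \sum_i x_{ij}[P_{n+1-i}]$ is pinned down by pairing both sides against each projective under the Euler form: since $\bform{P_{n+1-l}}{V}_A = \dim_k V e_{n+1-l}$ for any module $V$ (projectivity kills all higher terms), one gets $(C_X)_{lj} = \sum_i (C_A)_{li} x_{ij}$, i.e.\ $C_X = C_A x$, in a single computation in which $C_A$ enters as the Gram matrix of $\Bform_A$ in the projective basis. You instead route through the simple basis of $K_0(\modf A)$: the multiplicity formula $[V] = \sum_l (\dim_k V e_l)[S_l]$ identifies the image of $\eps_j$ with the $j$-th column of $C_X$ in simple coordinates, and $C_A$ reappears as the change-of-basis (decomposition) matrix from projectives to simples. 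These are dual formulations of the same fact --- $\bform{P_l}{S_m}_A = \delta_{lm}$, so Euler-pairing with $P_l$ is literally reading off the $[S_l]$-coordinate --- but your version needs two extra ingredients, both of which you correctly supply: the identification $\per A = \dA$ with $K_0(\per A) \simeq K_0(\modf A)$, so that the simples live in the same group as the projectives, and triangularity in the form $e_l A e_l \simeq k$, so that $\dim_k V e_l$ is the multiplicity of $S_l$ on the nose rather than that multiplicity times $\dim_k \mathrm{End}_A(S_l)$. One cosmetic slip: $\Hom_A(P_l, S_l) \simeq S_l e_l$, which is a \emph{quotient} of $e_l A e_l$; it is $\simeq k$ because $e_l\,\mathrm{rad}(A)\,e_l = 0$, so nothing breaks. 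Your reversed-index bookkeeping ($[P_{n+1-i}] = \sum_k (C_A)_{ki}[S_{n+1-k}]$, coordinate $(C_X)_{kj}$ for $[e_{n+1-j}X]$) checks out, as does the direction of the change of basis; what the paper's argument buys is brevity and staying entirely inside $\per A$, while yours makes the role of the simples as the dual basis explicit.
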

\begin{proof}
Denote that matrix (with respect to the basis $\eps_1,\dots,\eps_n$) by
$x$. Since the functor $- \tenl_A X$ sends each $P_j$ to
$P_j \ten_A X \simeq e_j X$, we have
\[
[e_{n+1-j} X] = \sum_{i=1}^n x_{ij} [P_{n+1-i}]
\]
for all $1 \leq i \leq n$. Now, for any $1 \leq l \leq n$,
\begin{align*}
(C_X)_{lj} &= \dim_k e_{n+1-j} X e_{n+1-l} =
\bform{P_{n+1-l}}{e_{n+1-j}X}_A \\
&= \sum_{i=1}^n x_{ij} \bform{P_{n+1-l}}{P_{n+1-i}}_A = \sum_{i=1}^n
(C_A)_{li} x_{ij} ,
\end{align*}
hence $C_X = C_A x$.
\end{proof}

When $A$ is triangular and $M$ is a triangular bimodule, the Cartan
matrix of the trivial extension $\gL = A \ltimes DM$ equals $C_{\gL} =
C_A + C_{DM} = C_A + C_M^T$. Hence $(C_{\gL})_{+} = C_A$ is the upper
triangular part of $C_{\gL}$ and $(C_{\gL})_{-} = C_M$ is its lower
triangular part, as defined in Proposition~\ref{p:linalg}.

Combining everything together, observing that the functor $-
\tenl_{\gL} A$ sends the projective $\iota(e_i) \gL$ to $e_i A$ and
thus induces the identity matrix between the isomorphic groups $K_0(\per \gL)$
and $K_0(\per A)$, we conclude the following.

\begin{cor} \label{c:decat}
The left diagram of Theorem~\ref{t:AM} induces a commutative diagram on
the Grothendieck groups
\[
\xymatrix{
K_0(\per \gL) \ar[d]_{I_n} \ar[r]^{r^{C_{\gL}}_n} &
K_0(\per \gL) \ar[r]^(0.6){r^{C_{\gL}}_{n-1}} &
{\dots} \ar[r]^(0.4){r^{C_{\gL}}_1} &
K_0(\per \gL) \ar[d]^{I_n} \\
K_0(A) \ar[rrr]^{-(C_{\gL})_{+}^{-1} (C_{\gL})_{-}^T} & & & K_0(A)
}
\]
(where $I_n$ is the $n \times n$ identity matrix), thus
recovering equation~\eqref{e:prod}.
\end{cor}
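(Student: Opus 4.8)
The plan is to take the left-hand commutative square of Theorem~\ref{t:AM}(b), which lives at the level of triangulated functors, and push it down to the Grothendieck groups, where it should become exactly equation~\eqref{e:prod} for $B = C_{\gL}$. The starting observation is the general principle that a triangulated functor between triangulated categories respects exact triangles and hence induces a well-defined $\bZ$-linear map on Grothendieck groups, and moreover that two naturally isomorphic triangulated functors induce the \emph{same} linear map. Since all four arrows of the square are triangulated functors and the square commutes up to natural isomorphism, applying $K_0(-)$ produces a genuinely commutative square of linear maps between $K_0(\per \gL)$ and $K_0(A)$.

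It then remains only to identify each of the four induced maps as a matrix in the chosen bases. First, by the preceding lemma on the effect of $R_i^{\gL}$, the functor $R_i$ induces the reflection $r^{C_{\gL}}_{n+1-i}$; reading the top row left to right this gives the arrows $r^{C_{\gL}}_n, r^{C_{\gL}}_{n-1}, \dots, r^{C_{\gL}}_1$. Second, as observed immediately before the statement, the vertical functor $- \tenl_{\gL} A$ sends $\iota(e_i)\gL$ to $e_i A$ and therefore induces the identity $I_n$ under the natural identification of $K_0(\per \gL)$ with $K_0(A)$. Third, I would apply the second preceding lemma, on the matrix of $- \tenl_A X$, to the bimodule $X = DM$: this yields $C_A^{-1} C_{DM} = C_A^{-1} C_M^T$, and the extra shift $[1]$ acts on $K_0$ as multiplication by $-1$, so the bottom functor $- \tenl_A DM_A[1]$ induces $-C_A^{-1} C_M^T$. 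Finally I would substitute $C_A = (C_{\gL})_{+}$ and $C_M = (C_{\gL})_{-}$, which hold because $C_{\gL} = C_A + C_M^T$ with $A$ and $M$ triangular, rewriting this as $-(C_{\gL})_{+}^{-1}(C_{\gL})_{-}^T$.

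Assembling the commutative square and tracing the two paths from the top-left to the bottom-right corner then gives precisely
\[
-(C_{\gL})_{+}^{-1}(C_{\gL})_{-}^T = r^{C_{\gL}}_1 \cdot r^{C_{\gL}}_2 \cdot \ldots \cdot r^{C_{\gL}}_n ,
\]
which is equation~\eqref{e:prod} for $B = C_{\gL}$. I do not expect any genuine obstacle here, since everything reduces to bookkeeping once the two lemmas and the functor square are in hand. The only two points demanding care are the reversed ordering of the basis, $\eps_j = [P_{n+1-j}]$, which is exactly what converts $R_i$ into $r^{C_{\gL}}_{n+1-i}$ and, upon composing the top row as a \emph{product of matrices} (which reverses the order of the arrows), turns the sequence $r^{C_{\gL}}_n, \dots, r^{C_{\gL}}_1$ into the product $r^{C_{\gL}}_1 \cdots r^{C_{\gL}}_n$; and the sign contributed by the shift $[1]$, which supplies precisely the minus sign on the left-hand side of~\eqref{e:prod}.
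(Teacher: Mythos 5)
Your proposal is correct and follows essentially the same route as the paper, which likewise derives the corollary by combining the lemma identifying the map induced by $R_i^{\gL}$ on $K_0(\per \gL)$ with $r^{C_{\gL}}_{n+1-i}$, the lemma giving $C_A^{-1}C_X$ for $- \tenl_A X$ (applied to $X = DM$, using $C_{DM} = C_M^T$, the sign $-1$ from the shift $[1]$, and $C_A = (C_{\gL})_+$, $C_M = (C_{\gL})_-$), and the observation that $- \tenl_{\gL} A$ sends $\iota(e_i)\gL$ to $e_iA$ and hence induces the identity matrix. The two bookkeeping points you single out --- the reversed basis $\eps_j = [P_{n+1-j}]$ and the order reversal when turning the composed top row into the matrix product $r^{C_{\gL}}_1 \cdots r^{C_{\gL}}_n$ --- are exactly the subtleties implicit in the paper's ``combining everything together,'' so nothing is missing.
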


\subsection{Proof of Corollary~\ref{c:Serre}}
Let $e_1,\dots,e_n$ be the primitive orthogonal idempotents of $A$ and
set $R_i = R^{T(A)}_i$ for $1 \leq i \leq n$.

The algebra $T(A)$ is symmetric and $\dim_k e_i T(A) e_i = 2$ for any
$1 \leq i \leq n$. Hence by~\cite[Remark~4.3]{HoshinoKato02}, the
functors $R^{T(A)}_i$ are autoequivalences, see
also~\cite[Theorem~4.1]{RouquierZimmermann03}.

Since $\nu_A = - \tenl_A DA$ and $A \ltimes DA = T(A)$,
Corollary~\ref{c:Serre} is just a special case of Theorem~\ref{t:AM},
where the triangular bimodule $M$ is taken to be $A$.

\begin{rem}
The Cartan matrix $B$ of $T(A)$ is symmetric with $2$ on its main
diagonal, hence the matrices $r^B_i$ are reflections. As the action of
each autoequivalence $R^{T(A)}_i$ on $K_0(\per T(A))$ is given by a
reflection, one may interpret this corollary as lifting of the Serre
functor (up to a shift by one) on $\dA$ to a product of ``reflection''
functors on $\per T(A)$.
\end{rem}

\subsection{Realization of matrices as Cartan matrices}
\label{ssec:realize}

We now show that Theorem~\ref{t:AM} categorifies~\eqref{e:prod} for any
integral square matrix $B$ with non-negative entries and positive
entries on its main diagonal. We start with an observation on the
realization of such matrices as Cartan matrices of finite dimensional
algebras.

\begin{lemma} \label{l:CasCA}
Let $C$ be an integral $n \times n$ matrix with $C_{ij} \geq 0$ and
$C_{ii} > 0$ for $1 \leq i, j \leq n$, and let $k$ be a field. Then
there exists a finite dimensional algebra $A$ over $k$ whose Cartan
matrix equals $C$.
\end{lemma}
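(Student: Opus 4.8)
The plan is to realize $C$ by a radical-square-zero quotient of a path algebra, a construction forced by the hypotheses: since $C$ is integral with $C_{ii} > 0$ we have $C_{ii} \geq 1$, so $C - I_n$ has non-negative integer entries, and it is exactly the matrices of the form $I_n + N$ with $N \geq 0$ that arise as Cartan matrices of radical-square-zero algebras.

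Concretely, first I would build a quiver $Q$ on the vertex set $\{1, \dots, n\}$ whose arrow multiplicities are prescribed by $C - I_n$. Fixing the convention that an arrow $\alpha \colon s \to t$ satisfies $e_t \alpha e_s = \alpha$, I would place, for each pair $i \neq j$, exactly $C_{ij}$ arrows from the vertex $n+1-i$ to the vertex $n+1-j$, and at each vertex $n+1-i$ exactly $C_{ii} - 1$ loops. The index shift $i \mapsto n+1-i$ merely relabels the vertices to match the reversed ordering used in the definition of the Cartan matrix, and since arrow multiplicities may be chosen freely it causes no difficulty. Then I would set $A = kQ / J^2$, where $J$ is the arrow ideal; this is a basic finite-dimensional $k$-algebra with $\dim_k A = n + \sum_{i \neq j} C_{ij} + \sum_i (C_{ii} - 1)$, and its trivial paths $e_1, \dots, e_n$ form a complete set of primitive orthogonal idempotents.

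The verification step is then a direct computation of the Cartan matrix. Because $J^2 = 0$, a $k$-basis of $e_b A e_a$ consists of the trivial path $e_a$ when $a = b$ together with all arrows from $a$ to $b$; hence $\dim_k e_b A e_a = \delta_{ab} + (\text{number of arrows } a \to b)$. Substituting $a = n+1-i$ and $b = n+1-j$ and using the prescribed multiplicities gives $(C_A)_{ij} = \dim_k e_{n+1-j} A e_{n+1-i} = \delta_{ij} C_{ii} + (1-\delta_{ij}) C_{ij} = C_{ij}$, so that $C_A = C$ as required.

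I do not expect a genuine obstacle here; the only thing to be careful about is bookkeeping, namely matching the arrow-direction convention to the paper's formula $(C_A)_{ij} = \dim_k e_{n+1-j} A e_{n+1-i}$ and carrying the reversed indexing $i \mapsto n+1-i$ through consistently. It is also worth noting that no hypothesis of algebraic closedness on $k$ is needed, since both the construction $kQ/J^2$ and the dimension count of $e_b A e_a$ are valid over an arbitrary field.
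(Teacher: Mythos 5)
Your proposal is correct and is essentially the paper's own proof: both take the radical-square-zero quotient $kQ/J^2$ of a path algebra whose arrow multiplicities are read off from $C - I_n$ with the index reversal $i \mapsto n+1-i$, and verify $C_A = C$ by counting paths of length at most one. The only difference is the arrow-direction convention (your quiver is the opposite of the paper's, which prescribes $C_{n+1-j,n+1-i}-\delta_{ij}$ arrows $i \to j$), and since you fix and carry your convention consistently, this is immaterial.
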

\begin{proof}
We construct $A$ from a quiver with relations. Let $Q$ be the quiver
whose vertices are $\{1, 2, \dots, n\}$, with the number of arrows from
$i$ to $j$ set to
\[
\left|\{\text{arrows $i \to j$}\}\right| =
C_{n+1-j,n+1-i} - \delta_{ij} .
\]

Let $I \subseteq kQ$ be the ideal in the path algebra of $Q$ generated
by all the paths of length $2$. Then the Cartan matrix of $A = kQ/I$
equals $C$, since $\dim_k \Hom_A(P_{n+1-i}, P_{n+1-j})$ is the number
of paths in $Q$ from $n+1-j$ to $n+1-i$ of length at most one, which
equals $C_{ij}$ by construction.
\end{proof}

Observe that the algebra $A$ constructed in the lemma is triangular if
and only if $C$ is upper triangular with ones on its main diagonal. We
now consider the realization of bimodules with prescribed Cartan matrix
over such algebras.

\begin{lemma} \label{l:CasCM}
Let $C'$ be an integral $n \times n$ matrix with non-negative entries
and $A$ a finite dimensional algebra as constructed in the previous
lemma, which is furthermore triangular. Then there exists a bimodule
$M$ over $A$ such that $C_M = C'$.
\end{lemma}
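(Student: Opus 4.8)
The plan is to reduce the problem to the realization of a single matrix unit and then to treat the general case by taking a direct sum. The key observation is that the assignment $M \mapsto C_M$ is additive under direct sums of bimodules: since $\dim_k e_{n+1-j}(M \oplus N) e_{n+1-i} = \dim_k e_{n+1-j} M e_{n+1-i} + \dim_k e_{n+1-j} N e_{n+1-i}$, we have $C_{M \oplus N} = C_M + C_N$. As every matrix $C'$ with non-negative integer entries is a non-negative integer combination $C' = \sum_{i,j} C'_{ij} E_{ij}$ of the matrix units $E_{ij}$, it suffices to produce, for each pair $(i,j)$, a bimodule whose Cartan matrix is $E_{ij}$, and then to assemble the required $M$ as the direct sum of these with the appropriate multiplicities.

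Second, I would construct the relevant ``point'' bimodules. For each ordered pair of vertices $(l,m)$, let $S_{lm}$ be the one-dimensional space $k$ equipped with the bimodule structure in which $e_l$ acts as the identity on the left, $e_m$ as the identity on the right, every other idempotent acts as zero, and every arrow of $Q$ acts as zero on both sides. The cleanest way to see that this is a well-defined $A$-$A$-bimodule is to regard it as the pullback, along the algebra surjection $A \ten_k A^{op} \twoheadrightarrow (A/J) \ten_k (A/J)^{op}$ (where $J$ is the Jacobson radical), of the one-dimensional simple module over the semisimple algebra $(A/J) \ten_k (A/J)^{op} \cong \prod_{p,q} k$ indexed by $(l,m)$; letting $J$ act by zero is automatically consistent with the bimodule axioms. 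By construction $\dim_k e_p S_{lm} e_q = \delta_{pl}\delta_{qm}$.

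I would then read off the Cartan matrix directly from the definition: $(C_{S_{lm}})_{ij} = \dim_k e_{n+1-j} S_{lm} e_{n+1-i} = \delta_{n+1-j,\,l}\,\delta_{n+1-i,\,m}$, so $C_{S_{lm}} = E_{n+1-m,\,n+1-l}$ is a single matrix unit. Equivalently $E_{ij} = C_{S_{n+1-j,\,n+1-i}}$, and putting everything together, the bimodule $M = \bigoplus_{i,j} S_{n+1-j,\,n+1-i}^{\oplus C'_{ij}}$ satisfies $C_M = \sum_{i,j} C'_{ij} E_{ij} = C'$, as required.

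The construction uses neither the specific quiver-with-relations description of $A$ from the previous lemma nor its triangularity, so it realizes an arbitrary non-negative integral matrix as $C_M$ over any basic finite-dimensional algebra. The only step needing a moment's care is the verification that letting the arrows act by zero yields a genuine bimodule, which is exactly what the pullback description above records; everything else is a routine dimension count.
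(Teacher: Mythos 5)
Your proof is correct, and it is essentially the degenerate case of the paper's construction, repackaged. The paper builds $M$ as a representation of the product quiver $Q^{op} \times Q$, placing a space of dimension $C'_{n+1-j,\,n+1-i}$ at the vertex $(i,j)$ and letting arrows act by zero except possibly at the diagonal vertices $(i,i)$, where the maps may be arbitrary; it then explicitly remarks that one could take all structure maps to be zero --- which is exactly your $M$, reorganized as the direct sum $\bigoplus_{i,j} S_{n+1-j,\,n+1-i}^{\oplus C'_{ij}}$ of one-dimensional bimodules via the additivity $C_{M \oplus N} = C_M + C_N$. Your verification of the bimodule axioms by pulling back along $A \ten_k A^{op} \twoheadrightarrow (A/J) \ten_k (A/J)^{op}$ is cleaner than the paper's, which checks that every length-two path of $Q^{op} \times Q$ acts by zero; that check uses both that the ideal $I$ is generated by paths of length $2$ and (for the nonzero maps at diagonal vertices) that triangularity of $A$ forbids loops, two hypotheses your semisimple version indeed does not need. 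What the paper's extra generality buys is the possibility of nonzero maps at the diagonal, so that in particular $A$ itself arises as a bimodule from the construction; your version cannot produce $A$, but since the lemma only asks for \emph{some} $M$ with $C_M = C'$, nothing is lost. One caveat on your closing claim of full generality: the argument works over any \emph{elementary} (split basic) algebra, i.e.\ one with $A/J \cong k^n$, but for a basic algebra over a non-algebraically-closed field $A/J$ may involve nontrivial division algebras, $(A/J) \ten_k (A/J)^{op}$ need not be $\prod_{p,q} k$, and the one-dimensional bimodule concentrated at $(l,m)$ need not exist --- e.g.\ for $A = \bC$ viewed over $k = \mathbb{R}$ there is no bimodule with $\dim_{\mathbb{R}} e_1 M e_1 = 1$. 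This does not affect the lemma, since the algebra of Lemma~\ref{l:CasCA} is elementary by construction.
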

\begin{proof}
Let $A = kQ/I$ be as in the previous lemma, and consider the quiver
$Q^{op} \times Q$ whose vertices are the pairs $(i,j)$ for $1 \leq i,j
\leq n$, with ``horizontal'' arrows $(i,j) \to (i,j')$ for every arrow
$j \to j'$ in $Q$ and ``vertical'' arrows $(i',j) \to (i,j)$ for every
arrow $i \to i'$ in $Q$. By our assumption on $A$, there are no loops
in this quiver.

We construct a representation $M$ of $Q^{op} \times Q$ as follows. For
any vertex $(i,j)$, let $M_{(i,j)}$ be a $k$-vector space of dimension
$C'_{n+1-j, n+1-i}$. For any arrow $\alpha : (i,j) \to (i',j')$, take
the linear map $M_{\alpha} : M_{(i,j)} \to M_{(i',j')}$ to be zero if
$i \neq j$ and arbitrary otherwise.

Observe that for any path in $Q^{op} \times Q$ of length $2$, the
corresponding map in $M$ vanishes, hence in particular $M$ can be
viewed as a bimodule over $A$. Moreover, $C_M = C'$ since
\[
(C_M)_{ij} = \dim_k e_{n+1-j} M e_{n+1-i} = \dim_k M_{(n+1-j,n+1-i)} =
C'_{ij}
\]
by construction.
\end{proof}

Note that in the proof we could have taken all the maps $M_{\alpha}$ to
be zero. However, the construction presented in the lemma is slightly
more general and in particular one can realize $A$ as a bimodule over
itself in this way.

Combining the above two observations, we deduce the following.

\begin{cor}
Let $B$ be an integral $n \times n$ matrix with $B_{ij} \geq 0$ and
$B_{ii} > 0$ for $1 \leq i, j \leq n$, and let $k$ be a field. Then
there exist a finite dimensional triangular algebra $A$ over $k$ and a
triangular bimodule $M$ over $A$ such that $C_A = B_+$ and $C_M = B_-$.
In particular, $B = C_{\gL}$ for $\gL = A \ltimes DM$.
\end{cor}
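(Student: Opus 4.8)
The plan is to realize the two triangular pieces of $B$ separately, using the two lemmas just proved, and then to glue them together through the trivial extension. Concretely, I would start from the decomposition $B = B_+ + B_-^T$ recorded in Proposition~\ref{p:linalg}, and aim to produce a triangular algebra $A$ with $C_A = B_+$ together with a triangular bimodule $M$ with $C_M = B_-$. The identity $C_{\gL} = C_A + C_M^T$ for the trivial extension $\gL = A \ltimes DM$ would then immediately give $C_{\gL} = B_+ + B_-^T = B$, which is the assertion.

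For the first step I would apply Lemma~\ref{l:CasCA} to the matrix $B_+$. By its definition, $B_+$ is upper triangular with ones on its main diagonal, and its off-diagonal entries are either $B_{ij} \geq 0$ (above the diagonal) or $0$ (below it), so the hypotheses $C_{ij} \geq 0$ and $C_{ii} > 0$ of that lemma are satisfied. This yields a finite dimensional algebra $A$ with $C_A = B_+$; and by the observation following Lemma~\ref{l:CasCA}, the fact that $B_+$ is upper triangular with ones on the diagonal forces $A$ to be triangular, exactly as needed.

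For the second step I would apply Lemma~\ref{l:CasCM} to the matrix $B_-$, with $A$ the triangular algebra from the first step. The only point that needs checking is that $B_-$ has non-negative entries: its above-diagonal entries are $B_{ji} \geq 0$, its below-diagonal entries vanish, and its diagonal entries are $B_{ii} - 1$, which are $\geq 0$ because $B_{ii}$ is a \emph{positive integer} and hence $B_{ii} \geq 1$. Feeding $C' = B_-$ into Lemma~\ref{l:CasCM} then produces a bimodule $M$ over $A$ with $C_M = B_-$. Since $B_-$ is upper triangular, $M$ is triangular, so $\gL = A \ltimes DM$ is a legitimate instance of the trivial extension construction used throughout, and $C_{\gL} = C_A + C_M^T = B$ as claimed.

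Once the two realization lemmas are in hand, this is essentially a bookkeeping exercise, so I do not expect a genuine obstacle. The one place to be careful is the sign and index conventions: one must notice that $B_-$ as defined is itself an \emph{upper} triangular matrix (it records the transposed strictly-lower part of $B$ on its diagonal-shifted entries), which is precisely what makes $M$ triangular and what makes the combination $C_A + C_M^T$ reproduce $B$ rather than a transpose of it. The small arithmetic point that uses the full strength of the diagonal hypothesis is the implication $B_{ii} > 0 \Rightarrow (B_-)_{ii} = B_{ii} - 1 \geq 0$, which relies on integrality and is exactly what guarantees that $B_-$ can be realized as $C_M$ for an honest bimodule.
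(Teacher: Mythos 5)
Your proposal is correct and follows exactly the paper's own proof: apply Lemma~\ref{l:CasCA} with $C = B_+$ to obtain the triangular algebra $A$, then Lemma~\ref{l:CasCM} with $C' = B_-$ to obtain the triangular bimodule $M$, and conclude via $C_{\gL} = C_A + C_M^T = B_+ + B_-^T = B$. Your explicit verifications of the lemmas' hypotheses (in particular that $(B_-)_{ii} = B_{ii} - 1 \geq 0$ by integrality, and that $B_-$ is upper triangular so $M$ is triangular) are exactly the routine checks the paper leaves implicit.
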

\begin{proof}
Use Lemma~\ref{l:CasCA} with $C=B_{+}$ to construct the algebra $A$, and
then Lemma~\ref{l:CasCM} with $C'=B_{-}$ to construct the bimodule $M$.
\end{proof}

In particular we see that Corollary~\ref{c:Serre}
categorifies~\eqref{e:coxprod} for any square upper triangular integer
matrix $C$ with non-negative entries and $1$ on its main diagonal (or
equivalently, for any square symmetric integer matrix $B$ with
non-negative entries and $2$ on its main diagonal).

\section{Discussion and Comparison}
\label{sec:quivers}

In this section we recall previous work on path algebras of quivers
that can be considered as a categorical interpretation of
equation~\eqref{e:coxprod}, and compare it with our approach.

\subsection{A result of Gabriel}

Fix an algebraically closed field $k$. For a quiver $Q$ without
oriented cycles, denote by $kQ$ its path algebra and by $\dQ$ the
bounded derived category of finite-dimensional right\ $kQ$-modules.
Recall that a vertex $s \in Q$ is called a \emph{sink} if there are no
arrows in $Q$ starting at $s$. The \emph{reflection} of $Q$ at $s$,
denoted $\sigma_s Q$, is the quiver obtained from $Q$ by inverting all
the arrows ending at $s$ while leaving all the others intact, so that
$s$ becomes a \emph{source} in $\sigma_s Q$.

In~\cite{BGP73}, Bernstein, Gelfand and Ponomarev defined the
\emph{reflection functor} from the category of representations of $Q$
to those of $\sigma_s Q$ (where $s$ is a sink in $Q$). In the language
of derived categories (see for example~\cite[(IV.4,
Exercise~6)]{GelfandManin03}), this functor induces a derived
equivalence
\[
R_s : \dQ \xrightarrow{\sim} \cD^b(\sigma_s Q) .
\]

Order now the vertices of $Q$ in an \emph{admissible ordering}, that
is, enumerate them in a sequence $1,2,\dots, n$ such that there are no
arrows $j \to i$ in $Q$ for $i < j$. In this case, the vertex $1 \leq i
\leq n$ is a sink in the quiver $\sigma_{i+1} \sigma_{i+2} \dots
\sigma_n Q$. Moreover, the quiver $\sigma_1 \dots \sigma_n Q$ is equal
to $Q$. Thus, the composition of the (derived) reflection functors
\[
\dQ \xrightarrow{R_n} \cD^b(\sigma_n Q) \xrightarrow{R_{n-1}}
\cD^b(\sigma_{n-1} \sigma_n Q) \xrightarrow{R_{n-2}} \dots
\xrightarrow{R_1} \cD^b(\sigma_1 \dots \sigma_n Q)
\]
defines an autoequivalence $R_1 \cdot R_2 \cdot \ldots \cdot R_n$
of $\dQ$, known as the \emph{Coxeter functor}.

Another autoequivalence of $\dQ$ is given by the \emph{Auslander-Reiten
translation} $\tau$, which is related to the Serre functor $\nu = -
\tenl_{kQ} D(kQ)$ on $\dQ$ by $\tau = \nu[-1]$, see~\cite{Happel88}.
The following result of Gabriel~\cite{Gabriel80} relates it with the
Coxeter functor.

\begin{theorem}[\cite{Gabriel80}] \label{t:Gabriel}
If the underlying graph of $Q$ is a tree, or more generally, does not
contain a cycle of odd length, then
\begin{equation} \label{e:ARprod}
\tau \simeq R_1 \cdot R_2 \cdot \ldots \cdot R_n .
\end{equation}
\end{theorem}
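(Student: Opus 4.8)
The plan is to prove \eqref{e:ARprod} by producing a natural isomorphism between the two triangulated autoequivalences $\tau$ and $R_1 \cdot R_2 \cdot \ldots \cdot R_n$ of $\dQ$, which I abbreviate by $\Psi := R_1 \cdot R_2 \cdot \ldots \cdot R_n$. As a first, purely bookkeeping step I would check that both functors induce the \emph{same} automorphism of $K_0(\dQ)$, namely the Coxeter transformation. For the reflection functors this is classical: each $R_s$ induces the simple reflection $s_s$ once one identifies $K_0(\cD^b(\sigma_s Q))$ with $K_0(\dQ)$ via the common basis of simples, so $\Psi$ induces the product $s_1 \cdot s_2 \cdot \ldots \cdot s_n$. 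For $\tau = \nu[-1]$ with $\nu = - \tenl_{kQ} D(kQ)$, the induced map is the Coxeter transformation as well, which in the hereditary case is exactly the content of \eqref{e:coxprod} applied to $C = C_{kQ}$. This decategorified agreement is necessary but far from sufficient, so it only serves to pin down \emph{which} natural isomorphism to look for.

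Next I would exploit the fact that $kQ$ is hereditary. Since $\dQ$ is generated by the indecomposable projectives $P_1, \dots, P_n$, and every object is a finite direct sum of shifts of indecomposables, it suffices to build a natural isomorphism $\tau(X) \xrightarrow{\sim} \Psi(X)$ on the additive subcategory generated by the projectives and to check compatibility with the morphisms among them. On objects the computation is favourable on both sides: the Nakayama functor sends $P_i$ to the indecomposable injective $I_i$, so by Happel's identification $\tau(P_i) = \nu(P_i)[-1] \simeq I_i[-1]$; and, tracking the reflection functors along the sequence of quivers $Q, \sigma_n Q, \sigma_{n-1}\sigma_n Q, \dots, \sigma_1\cdots\sigma_n Q = Q$, one sees that the BGP reflection functors carry each $P_i$ through to $I_i[-1]$ as well, which is the classical statement that the Coxeter functor turns projectives into shifted injectives. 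Hence $\tau$ and $\Psi$ agree on objects, and the remaining task is to organize these identifications into one natural transformation.

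The main obstacle is precisely this last step: upgrading object-wise agreement to a natural isomorphism of functors. Each reflection functor $R_s$ at a sink $s$ is defined through the kernels and cokernels of the structure maps $\bigoplus_{\alpha : t \to s} V_t \to V_s$, and composing the $R_s$ along the admissible ordering forces a choice of sign for each arrow in the connecting/differential maps. When one tests the candidate natural transformation on a morphism whose support runs around a cycle of the underlying graph of $Q$, the compatibility condition reduces to the product of these arrow-signs around the cycle being $+1$; I expect this product to equal $(-1)^{\ell}$ for a cycle of length $\ell$. Consequently a global gauge trivializing all the signs can be chosen exactly when the underlying graph contains no cycle of odd length (in particular when it is a tree), which is the hypothesis of the theorem, and in that case the natural isomorphism \eqref{e:ARprod} follows; for a graph carrying an odd cycle the obstruction is genuine and $\Psi$ differs from $\tau$ by a nontrivial autoequivalence acting trivially on $K_0$. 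Making the sign bookkeeping precise, and proving that the even-cycle condition is exactly what kills the obstruction, is the heart of the argument.
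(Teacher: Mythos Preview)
The paper does not contain a proof of this theorem: Theorem~\ref{t:Gabriel} is quoted from Gabriel~\cite{Gabriel80} purely for purposes of discussion and comparison in Section~\ref{sec:quivers}, and no argument for it is given anywhere in the text. So there is nothing in the paper to compare your proposal against.

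That said, your outline is broadly in the spirit of how the result is actually established. The identification of both $\tau$ and $\Psi$ with the Coxeter transformation on $K_0$ is standard, and the observation that both send $P_i$ to $I_i[-1]$ is the classical starting point. Your diagnosis of the obstruction --- that composing the BGP functors introduces a sign attached to each arrow, and that the resulting cocycle is a coboundary precisely when the underlying graph has no odd cycle --- is exactly the mechanism behind the hypothesis in Gabriel's statement.

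One point that would need tightening if you were to carry this out: the step ``it suffices to build a natural isomorphism on the additive subcategory generated by the projectives and check compatibility with morphisms among them'' is not automatic for triangulated functors. Agreement on a generating set, even naturally, does not in general force an isomorphism of triangulated functors. In the hereditary setting one circumvents this either by working directly with the complexes of bimodules representing $\tau$ and $\Psi$ (both are standard equivalences), or by using that every object of $\dQ$ splits as a direct sum of shifted modules and that a natural isomorphism between exact endofunctors of $\modf kQ$ propagates to $\dQ$. Either route works, but the passage needs to be made explicit rather than asserted.
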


Similarly to Corollary~\ref{c:decat}, the relation with
equation~\eqref{e:coxprod} is obtained through decategorification, by
considering the Grothendieck group $K_0(Q)$ of the triangulated
category $\dQ$ together with its Euler form $\Bform_{kQ}$, but this
time using bases of \emph{simple} modules rather than the
indecomposable projective ones.

Let $S_i$ be the simple module corresponding to the vertex $1 \leq i
\leq n$. The classes $[S_1], \dots, [S_n]$ form a basis of $K_0(Q)$,
and the matrix $C_Q$ of $\Bform_{kQ}$ with respect to that basis has an
explicit combinatorial description, namely
\[
(C_Q)_{ij} = \delta_{ij} -\bigl| \{\text{arrows } i \to j\} \bigr| .
\]
When the vertices are ordered in an admissible order, the matrix $C_Q$
is upper triangular with ones on its main diagonal.

It is well known that the matrix of the linear map on $K_0(Q)$ induced
by $\tau$ is given by $-C_Q^{-1} C_Q^T$. On the other hand, for a sink
$s$, the reflection functor $R_s$ induces a linear map $K_0(Q) \to
K_0(\sigma_s Q)$ whose matrix with respect to the bases of simples is
given by the reflection $r_s^{B_Q}$, where $B_Q = C_Q + C_Q^T$ is the
symmetrization of $C_Q$, see~\cite{BGP73}. Moreover, $B_{\sigma_s Q} =
B_Q$ since
\[
(B_Q)_{ij} = 2 \delta_{ij} - \bigl| \{\text{arrows } i \to j\} \bigr|
- \bigl|\{\text{arrows } j \to i\} \bigr|
\]
is independent on the orientation of the arrows.

Therefore, by looking at the Grothendieck groups,
Theorem~\ref{t:Gabriel} implies the following commutative diagram
\[
\xymatrix{
K_0(Q) \ar[r]^{r_n^{B_Q}} \ar[d]_{I_n} &
K_0(\sigma_n Q) \ar[r]^(0.6){r_{n-1}^{B_Q}} &
{\dots} \ar[r]^(0.35){r_1^{B_Q}} &
K_0(\sigma_1 \dots \sigma_n Q) \ar[d]^{I_n} \\
K_0(Q) \ar[rrr]^{-C_Q^{-1} C_Q^T} & & & {K_0(Q),} }
\]
recovering equation~\eqref{e:coxprod} for $C=C_Q$ as a $K$-theoretical
consequence of the isomorphism of the functors $\tau$ and
$R_1 \cdot R_2 \cdot \ldots \cdot R_n$ on $\dQ$.

\subsection{Comparison}
While Theorem~\ref{t:AM} and its Corollary~\ref{c:Serre}, on the one hand,
and Theorem~\ref{t:Gabriel}, on the other hand, can all be regarded as
categorical interpretations of equations~\eqref{e:prod}
and~\eqref{e:coxprod}, there are several differences,
which are outlined below.

\subsubsection{Scope}
Compared with Theorem~\ref{t:Gabriel}, Theorem~\ref{t:AM} has broader scope
in two aspects; first, it applies to any finite dimensional triangular
algebra $A$, and not only to hereditary ones. Second, it applies to any
triangular bimodule $M$, and not only to $M=A$, thus providing an
interpretation of equation~\eqref{e:prod} rather than the more
specific~\eqref{e:coxprod}.

\subsubsection{Lifting vs.\ factorization}
This broader scope carries some price to be paid, namely that while
Theorem~\ref{t:Gabriel} provides a factorization of the
Auslander-Reiten translation as a composition of the reflection
functors, Theorem~\ref{t:AM} does not factor $- \tenl_A DM[1]$, but
rather provides only a factorization of a lift to $\per \gL$ for $\gL =
A \ltimes DM$.

\subsubsection{The choice of matrix $C$}
Both Corollary~\ref{c:Serre} and Theorem~\ref{t:Gabriel} categorify
the same statement, namely equation~\eqref{e:coxprod}, and in both cases
the upper triangular matrix $C$ is the matrix of the Euler form with respect
to some basis. However, in Corollary~\ref{c:Serre} this is the basis of
indecomposable projectives, while in Theorem~\ref{t:Gabriel} it is the basis
of simples.

The use of the basis of simples is a rather special feature of
hereditary algebras. Indeed, recall that for a quiver $Q$ and a sink $s$, one
has $C_{\sigma_s Q} = r_s^T C_Q r_s$ where $r_s$ is the corresponding
reflection
built from the symmetrization of $C_Q$. However, for a general
triangular algebra $A$ whose Euler form is given by an upper triangular
matrix $C$ with respect to the basis of simples, there may be no
algebra whose Euler form is given by the matrix $r_s^T C r_s$, where $s$
is a sink or a source in the quiver of $A$ and $r_s$ is the corresponding
reflection built from the symmetrization of $C$.

\begin{example}
Let $A$ be the algebra given by the quiver
\[
\xymatrix{
{\bullet_1} \ar[r]^{\alpha} &
{\bullet_2} \ar[r]^{\beta} &
{\bullet_3}
}
\]
modulo the relation $\alpha \beta = 0$. The matrix of its
Euler form with respect to the basis of simples $\{S_1, S_2, S_3\}$ is
\[
C =
\begin{pmatrix}
1 & -1 & 1 \\
0 & 1 & -1 \\
0 & 0 & 1
\end{pmatrix} ,
\]
and
\begin{align*}
r_1 = \begin{pmatrix}
-1 & 1 & -1 \\ 0 & 1 & 0 \\ 0 & 0 & 1
\end{pmatrix}
&,&
r_2 = \begin{pmatrix}
1 & 0 & 0 \\ 1 & -1 & 1 \\ 0 & 0 & 1
\end{pmatrix}
&,&
r_3 = \begin{pmatrix}
1 & 0 & 0 \\ 0 & 1 & 0 \\ -1 & 1 & -1
\end{pmatrix}
\end{align*}
are the reflections built from the symmetrization $B = C + C^T$.

The matrices $r_1^T C r_1$ and $r_3^T C r_3$ cannot represent Euler
forms of algebras with respect to bases of simples. Indeed, if this
were the case then their inverses would be Cartan matrices of algebras,
which is impossible since
\begin{align*}
\left(r_1^T C r_1 \right)^{-1} = \begin{pmatrix}
1 & 0 & 0 \\ 0 & 1 & 1 \\ -1 & 0 & 1
\end{pmatrix}
&,&
\left(r_3^T C r_3 \right)^{-1} = \begin{pmatrix}
1 & 1 & 0 \\ 0 & 1 & 0 \\ -1 & 0 & 1
\end{pmatrix}
\end{align*}
contain negative entries.
\end{example}

\subsubsection{The shift}
Finally, observe that in both results, the minus sign in the
left hand side of~\eqref{e:prod} and~\eqref{e:coxprod} is interpreted as
a shift applied to the functor of tensoring with a bimodule.
However, in Theorem~\ref{t:AM} it is a positive
shift, while in Theorem~\ref{t:Gabriel} it is a negative one. Of course,
they are indistinguishable in the Grothendieck group.


\begin{thebibliography}{10}

\bibitem{ACampo76}
Norbert A'Campo, \emph{Sur les valeurs propres de la transformation de
  {C}oxeter}, Invent. Math. \textbf{33} (1976), no.~1, 61--67.

\bibitem{BaezDolan98}
John~C. Baez and James Dolan, \emph{Categorification}, Higher category
theory
  ({E}vanston, {IL}, 1997), Contemp. Math., vol. 230, Amer. Math. Soc.,
  Providence, RI, 1998, pp.~1--36.

\bibitem{BGP73}
I.N. Bernstein, I.M. Gel'fand, and V.A. Ponomarev, \emph{{Coxeter
functors and
  Gabriel's theorem.}}, Russ. Math. Surv. \textbf{28} (1973), no.~2, 17--32
  (English).

\bibitem{BondalKapranov89}
A.~I. Bondal and M.~M. Kapranov, \emph{Representable functors, {S}erre
  functors, and reconstructions}, Izv. Akad. Nauk SSSR Ser. Mat. \textbf{53}
  (1989), no.~6, 1183--1205, 1337.

\bibitem{Bourbaki02}
Nicolas Bourbaki, \emph{Lie groups and {L}ie algebras. {C}hapters
4--6},
  Elements of Mathematics (Berlin), Springer-Verlag, Berlin, 2002, Translated
  from the 1968 French original by Andrew Pressley.

\bibitem{BrennerButler76}
Sheila Brenner and M.~C.~R. Butler, \emph{The equivalence of certain
functors
  occurring in the representation theory of {A}rtin algebras and species}, J.
  London Math. Soc. (2) \textbf{14} (1976), no.~1, 183--187.

\bibitem{Cibils89}
Claude Cibils, \emph{Cohomology of incidence algebras and simplicial
  complexes}, J. Pure Appl. Algebra \textbf{56} (1989), no.~3, 221--232.

\bibitem{Coleman89}
A.~J. Coleman, \emph{Killing and the {C}oxeter transformation of
{K}ac-{M}oody
  algebras}, Invent. Math. \textbf{95} (1989), no.~3, 447--477.

\bibitem{Coxeter51}
H.~S.~M. Coxeter, \emph{The product of the generators of a finite group
  generated by reflections}, Duke Math. J. \textbf{18} (1951), 765--782.

\bibitem{Gabriel80}
Peter Gabriel, \emph{Auslander-{R}eiten sequences and
representation-finite
  algebras}, Representation theory, I (Proc. Workshop, Carleton Univ., Ottawa,
  Ont., 1979), Lecture Notes in Math., vol. 831, Springer, Berlin, 1980,
  pp.~1--71.

\bibitem{GelfandManin03}
Sergei~I. Gelfand and Yuri~I. Manin, \emph{Methods of homological
algebra},
  second ed., Springer Monographs in Mathematics, Springer-Verlag, Berlin,
  2003.

\bibitem{Happel88}
Dieter Happel, \emph{Triangulated categories in the representation
theory of
  finite-dimensional algebras}, London Mathematical Society Lecture Note
  Series, vol. 119, Cambridge University Press, Cambridge, 1988.

\bibitem{HoshinoKato02}
Mitsuo Hoshino and Yoshiaki Kato, \emph{Tilting complexes defined by
  idempotents}, Comm. Algebra \textbf{30} (2002), no.~1, 83--100.

\bibitem{Howlett82}
Robert~B. Howlett, \emph{Coxeter groups and {$M$}-matrices}, Bull.
London Math.
  Soc. \textbf{14} (1982), no.~2, 137--141.

\bibitem{KMS08}
Mikhail Khovanov, Volodymyr Mazorchuk, and Catharina Stroppel, \emph{A
  categorification of integral {S}pecht modules}, Proc. Amer. Math. Soc.
  \textbf{136} (2008), no.~4, 1163--1169.

\bibitem{Ladkani08}
Sefi Ladkani, \emph{On the periodicity of {C}oxeter transformations and
the
  non-negativity of their {E}uler forms}, Linear Algebra Appl. \textbf{428}
  (2008), no.~4, 742--753.

\bibitem{Lenzing99}
Helmut Lenzing, \emph{Coxeter transformations associated with
  finite-dimensional algebras}, Computational methods for representations of
  groups and algebras ({E}ssen, 1997), Progr. Math., vol. 173, Birkh\"auser,
  Basel, 1999, pp.~287--308.

\bibitem{MazorchukStroppel08}
Volodymyr Mazorchuk and Catharina Stroppel, \emph{Projective-injective
modules,
  {S}erre functors and symmetric algebras}, J. Reine Angew. Math. \textbf{616}
  (2008), 131--165.

\bibitem{RouquierZimmermann03}
Rapha{\"e}l Rouquier and Alexander Zimmermann, \emph{Picard groups for
derived
  module categories}, Proc. London Math. Soc. (3) \textbf{87} (2003), no.~1,
  197--225.

\bibitem{SeidelThomas01}
Paul Seidel and Richard Thomas, \emph{Braid group actions on derived
categories
  of coherent sheaves}, Duke Math. J. \textbf{108} (2001), no.~1, 37--108.

\bibitem{Stroppel03}
Catharina Stroppel, \emph{Category {$\cO$}: quivers and endomorphism
rings of
  projectives}, Represent. Theory \textbf{7} (2003), 322--345 (electronic).

\end{thebibliography}

\providecommand{\bysame}{\leavevmode\hbox to3em{\hrulefill}\thinspace}
\providecommand{\MR}{\relax\ifhmode\unskip\space\fi MR }
\providecommand{\MRhref}[2]{%
  \href{http://www.ams.org/mathscinet-getitem?mr=#1}{#2}
} \providecommand{\href}[2]{#2}

\end{document}